\newtheorem{thm}{Theorem}[section]
\newtheorem{lem}[thm]{Lemma}
\newtheorem{cor}[thm]{Corollary}
\newtheorem{prop}[thm]{Proposition}
\newtheorem*{lem*}{Lemma}
\newtheorem*{cor*}{Corollary}
\newtheorem*{prop*}{Proposition}
\newtheorem*{fact*}{Fact}
\newtheorem*{conj*}{Conjecture}
\newtheorem*{clm*}{Claim}
\theoremstyle{definition}
\newtheorem{defn}[thm]{Definition}
\newtheorem{ex}[thm]{Example}
\newtheorem{axiom}[thm]{Axiom}
\newtheorem*{defn*}{Definition}
\newtheorem*{ex*}{Example}
\newtheorem*{xca*}{Exercise}
\newtheorem*{axiom*}{Axiom}
\theoremstyle{remark}
\newtheorem{rmk}[thm]{Remark}
\newtheorem{notation}[thm]{Notation}
\newtheorem*{rmk*}{Remark}
\newtheorem*{setup*}{Setup}
\newtheorem*{cond*}{Condition}
\newtheorem*{cons*}{Construction}
\newtheorem*{obs*}{Observation}
\newtheorem*{ques*}{Question}
\newtheorem*{dis*}{Discussion}
\numberwithin{equation}{section}
\newlist{thmlist}{enumerate}{1}
\setlist[thmlist]{label=(\roman{thmlisti}),
                  ref=\thethm.(\roman{thmlisti}),
                  noitemsep}
\Crefname{listthm}{Theorem}{Theorems}
\Crefname{listlem}{Lemma}{Lemmas}
\Crefname{listprop}{Proposition}{Propositions}
\Crefname{listcor}{Corollary}{Corollaries}
\Crefname{listfact}{Fact}{Facts}
\Crefname{listconj}{Conjecture}{Conjectures}
\Crefname{listclm}{Claim}{Claims}
\DeclareMathOperator{\Ima}{Im}
\DeclareMathOperator{\Hom}{Hom}
\DeclareMathOperator{\Ann}{Ann}
\newcommand{\mbb}[1]{\mathbb{#1}}
\newcommand{\NN}{\mbb{N}}
\newcommand{\msc}[1]{\mathscr{#1}}
\newcommand{\sB}{\msc{B}}
\newcommand{\mf}[1]{\mathfrak{#1}}
\newcommand{\mm}{\mf{m}}
\newcommand{\mrm}[1]{\mathrm{#1}}
\newcommand{\mH}{\mrm{H}}
\newcommand{\mcal}[1]{\mathcal{#1}}
\newcommand{\cC}{\mcal{C}}
\newcommand{\sheafname}[2]{\mathscr{#1}\text{\kern -3pt {\calligra\large  #2}}\,}
\newcommand{\SDer}{\mathscr{D}\text{\kern -2pt {\calligra\large er}}\,}
\newcommand{\seq}[3][1]{#2_{#1},\dots,#2_{#3}}
\DeclarePairedDelimiter\ps{\llbracket}{\rrbracket}
\DeclarePairedDelimiter\set{\{}{\}}
\renewcommand*{\eqref}[1]{%
  \hyperref[{#1}]{\textup{\tagform@{\ref*{#1}}}}%
}
\definecolor{zhan}{rgb}{0.0, 0.44, 1.0}
\newtcolorbox{mybox}[3][beamer]{
    colback=white,
    coltitle=black,
    colframe=#3,
    coltext=zhan,
    title={\textbf{#2}},
    valign=top,
    halign=left,
    before skip=0.5cm,
    after skip=0.5cm,
    attach title to upper,
    after title={:\ },
    #1
}
\DeclareMathOperator{\cl}{cl}
\DeclareMathOperator{\intr}{i}
\DeclareMathOperator{\epf}{epf}
\DeclareMathOperator{\wepf}{wepf}
\newcommand{\dual}{\smallsmile}
\newcommand{\sop}{system of parameters}
\newcommand{\fg}{finitely-generated}
\newcommand{\CM}{Cohen-Macaulay}
\newcommand{\Z}{\mathbb{Z}}
\title[Rationality for arbitrary closure operations]{Rationality for arbitrary closure operations and the test ideal of full extended plus closure}
\author{Zhan Jiang}
\address{Department of Mathematics, University of Michigan, Ann Arbor, MI 48109}
\email{zoeng@umich.edu}
\author{Rebecca R.G.}
\address{Department of Mathematical Sciences, George Mason University, 
Fairfax, VA 22030}
\email{rrebhuhn@gmu.edu}
\date{\today}
\begin{document}

\maketitle

\begin{abstract}
    We extend the notion of F-rationality to other closure operations, inspired by the work of Smith, Epstein and Schwede, and Ma and Schwede, which describe F-rationality in terms of the canonical module and top local cohomology module. We give conditions for a closure operation cl on a \CM\ complete local ring under which cl-rationality is equivalent to parameter ideals being cl-closed.
    We also demonstrate that full extended plus closure as defined by Heitmann and weak full extended plus closure as defined by the first named author have no big test elements.
\end{abstract}

\section{Introduction}

Rational or F-rational singularities have long been a useful descriptor for rings of equal characteristic \cite{fedderwatanabe,hochsterhunekefrationality}. Given the proliferation of possible closure operations in the mixed characteristic case \cite{MaSc2020,Heitmann2018,jiang,Perez2019}, it is useful to know for which closures cl we can define cl-rational singularities, and what that definition looks like. In this paper we build on two definitions originally used for F-rationality over \CM\ local rings: first, that a ring is F-rational if one or all parameter ideals are tightly closed \cite{fndtc}, and second that a ring is F-rational if the annihilator of the tight closure of 0 in the injective hull of the residue field is the whole ring \cite{SmithThesis}. These ideas were used more recently in \cite{MaSc2020} to define BCM-rationality in terms of the annihilator in the canonical module of their BCM-closure of $(0)$ in $\mH_m^d(R)$. 

Using these ideas, in Section \ref{sec:clrational} we define cl-rationality for closure operations on \CM\ local rings in terms of the canonical module and $\mH_m^d(R)$, and show that under conditions comparable to those used for tight closure, this is equivalent to parameter ideals being cl-closed. We expect these results to be useful for researchers defining new closure operations in any characteristic, who want to check if cl-rationality coincides with rationality in equal characteristic 0, F-rationality in equal characteristic $p$, and/or BCM-rationality in any characteristic. To demonstrate the importance of choosing the right closure, in Section \ref{sec:examples} we show that even F-rational rings may not be $\cl_B$-rational for the module closures coming from some \fg\ \CM\ modules $B$.

The other key result of this paper is that full extended plus closure (epf) as defined by Heitmann \cite{heitmannExtensionsofPlusClosure,HeitmannDirectSummand} does not have big test elements in general (see Section \ref{sec:epf}). This gives a more explicit way to understand how epf closure is ``too big'' to be the right mixed characteristic closure to define singularity types. We further prove that the epf test ideals (big and finitistic) agree with those of wepf as defined by the first named author \cite{jiang}, implying that the latter also lacks big test elements.

\section{Background}

Throughout, $R$ will be a commutative Noetherian ring of Krull dimension $d$. When $(R,m,k)$ is local, $E=E_R(k)$ will denote the injective hull of $k$ and $^\vee$ the Matlis duality operator $\Hom_R(-,E)$.

\begin{defn}
A \textit{closure operation} $\cl$ on a category $\mathcal{M}$ of $R$-modules is a map sending each pair $N \subseteq M$ contained in $\mathcal{M}$ to an $R$-module $N_M^{\cl} \subseteq M$ such that
\begin{enumerate}
    \item $N \subseteq N_M^{\cl}$;
    \item $(N_M^{\cl})_M^{\cl}=N_M^{\cl}$;
    \item and if $N \subseteq N' \subseteq M$ are in $\mathcal{M}$, then $N_M^{\cl} \subseteq (N')_M^{\cl}$.
\end{enumerate}

An \textit{interior operation} $\intr$ on a category $\mathcal{M}$ of $R$-modules is a map sending each $R$-module $M$ to a submodule $\intr(M)$ such that
\begin{enumerate}
    \item $\intr(\intr(M))=\intr(M)$;
    \item and if $L \subseteq M$, then $\intr(L) \subseteq \intr(M)$.
\end{enumerate}
\end{defn}

\begin{notation}
Let $R$ be a ring and $\cl, \cl'$ closure operations on a category $\mathcal{M}$ of $R$-modules. We say that $\cl \leqslant \cl'$ if for all $N \subseteq M$ in $\mathcal{M}$, $N_M^{\cl} \subseteq N_M^{\cl'}$.
\end{notation}

\begin{defn}[\cite{R.G.2016}]
\label{defn:moduleclosure}
Let $B$ be an $R$-module. The \textit{module closure} coming from $B$ is given by
\[L_M^{\cl_B}:=\{x \in M \mid b \otimes x \in \text{im}(B \otimes_R L \to B \otimes_R M) \text{ for all } b \in B\}.\]
\end{defn}

\begin{defn}[{\cite[Definition 2.2]{EpRGVNonresidual}}]
    Let $R$ be a commutative ring and let $\cl$ be a closure operation on a category of $R$-modules $\mathcal{M}$. We say that $\cl$ is \textit{functorial} if whenever $L \subseteq M, N$ are in $\mathcal{M}$ and $f:M \to N$ is a map in $\mathcal{M}$, $f(L_M^{\cl}) \subseteq f(L)_N^{\cl}$.
    
    We say that $\cl$ is \textit{residual} if whenever $L \subseteq N \subseteq M$ in $\mathcal{M}$ such that $M/L,N/L$ are also in $\mathcal{M}$, $N_M^{\cl}=\pi^{-1}((N/L)_{M/L}^{\cl})$ where $\pi:M \to M/L$ is the natural surjection.
\end{defn}

\begin{defn}[{\cite[Definition 3.1]{EpRGVNakayama}}]
Let $\cl$ be a residual closure operation on $R$-modules. The \textit{finitistic version} $\cl_{fg}$ of $\cl$ is given by
\[L_M^{\cl_{fg}}=\bigcup \left\{L_N^{\cl} \mid L \subseteq N \subseteq M, N/L \text{ \fg} \right\}.\]
We say that a residual closure operation $\cl$ is \textit{finitistic} if it is equal to its finitistic version.
\end{defn}

See \cite{EpRGVNonresidual} for a discussion of the finitistic version in the non-residual case.

\begin{defn}[{\cite[Definition 3.9]{R.G.2016}}]
\label{def:strongccversionA}
Let $R$ be local and $\cl$ a closure operation on at least ideals of $R$. We say that $\cl$ satisfies \textit{colon-capturing} if for every partial \sop\ $x_1,\ldots,x_{k+1}$ on $R$,
\[(x_1,\ldots,x_k):x_{k+1} \subseteq (x_1,\ldots,x_k)^{\cl}.\]

We say that $\cl$ satisfies \textit{strong colon-capturing, version A} if for every partial \sop\ $x_1,\ldots,x_k$ on $R$ and $0 \leqslant a < t$, 
\[(x_1^t,x_2,\ldots,x_k):_R x_1^a \subseteq (x_1^{t-a},x_2,\ldots,x_k)^{\cl}. \]

We say that $\cl$ satisfies \textit{strong colon-capturing, version B} if for every partial \sop\ $x_1,\ldots,x_{k+1}$ on $R$,
\[(x_1,x_2,\ldots,x_k)^{\cl} :_R x_{k+1} \subseteq (x_1,x_2,\ldots,x_k)^{\cl}.\]
\end{defn}

\begin{rmk}
\label{rmk:ccimplications}
    Note that strong colon-capturing, version A does not necessarily imply colon-capturing, but strong colon-capturing, version B does imply colon-capturing.
\end{rmk}

\begin{defn}[\cite{EpRG2020}]
\label{defn:dual}
Let $(R,m,k)$ be a complete local ring and $\cl$ a residual closure operation on $R$-modules. We define a dual interior operation $\cl^\dual$ on \fg\ and Artinian $R$-modules, by
\[\cl^\dual(M)=\left(\frac{M^\vee}{0_{M^\vee}^{\cl}} \right)^\vee.\]
Note that for these modules $M \cong M^{\vee \vee}$, so $\cl^\dual(M)$ can be viewed as a submodule of $M$ via this isomorphism.
\end{defn}

\begin{defn}
Let $\cl$ be a closure operation on $R$-modules. The \textit{$\cl$-test ideal} is
\[\tau_{\cl}(R)=\bigcap_{N \subseteq M} N:_R N_M^{\cl},\]
where the intersection is taken over all $R$-modules $N \subseteq M$.

The \textit{finitistic $\cl$-test ideal} is
\[\tau_{\cl}^{fg}(R)=\bigcap_{N \subseteq M \text{ f.g.}} N:_R N_M^{\cl},\]
where the intersection is taken over all \fg\ $R$-modules $N \subseteq M$.
\end{defn}

The following result demonstrates how $\cl^\dual(R)$ can be perceived as the $\cl$-test ideal of $R$.

\begin{lem}[{\cite[Proposition 3.9]{Perez2019},\cite[Theorem 5.5]{EpRG2020}}]
Let $(R,m,k)$ be complete and local.
If $\cl$ is a functorial, residual closure operation on $R$-modules, then $\tau_{\cl}(R)=\Ann_R 0_{E_R(k)}^{\cl}$.

As a result, $\tau_{\cl}(R)=\cl^{\dual}(R)$.
\end{lem}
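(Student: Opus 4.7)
The plan is to reduce the intersection defining $\tau_\cl(R)$ to a single ``universal'' calculation on the injective hull $E$ using residuality, and then to deduce the corollary from Matlis duality.

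\textbf{Reduction via residuality.} First I would apply residuality to the chain $N \subseteq N \subseteq M$ with $L = N$: since $N/L = 0$, the definition yields $N_M^\cl / N = 0_{M/N}^\cl$, so $N :_R N_M^\cl = \Ann_R 0_{M/N}^\cl$. As $N \subseteq M$ varies over all pairs, $M/N$ sweeps out all $R$-modules, and the defining intersection collapses to
\[ \tau_\cl(R) = \bigcap_{M} \Ann_R 0_M^\cl. \]

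\textbf{Universality of $E$.} The containment $\tau_\cl(R) \subseteq \Ann_R 0_E^\cl$ is then immediate by specializing $M = E$. For the reverse containment, given $r \in \Ann_R 0_E^\cl$ and $x \in 0_M^\cl$, I would argue by contradiction that $rx = 0$: otherwise, $R \cdot rx$ is a nonzero cyclic submodule of $M$, which by Nakayama surjects onto $k$; composing with $k \hookrightarrow E$ and extending by injectivity of $E$ yields a map $f \colon M \to E$ with $f(rx) \neq 0$. Functoriality of $\cl$ applied to $f$ (with $L = 0$) forces $f(x) \in 0_E^\cl$, whence $f(rx) = r f(x) = 0$, a contradiction. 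This reverse containment is the technical heart of the argument and where I expect the main obstacle to lie: the three hypotheses must interlock just right, with residuality doing the reduction, functoriality transporting the closure element, and injectivity of $E$ together with $R$ being local (so cyclic modules surject onto $k$) supplying enough ``detection'' maps $M \to E$.

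\textbf{Corollary via Matlis duality.} For the ``as a result'' clause, since $R^\vee = \Hom_R(R, E) = E$, the definition of $\cl^\dual$ specializes to
\[ \cl^\dual(R) = \Hom_R(E/0_E^\cl,\, E). \]
A homomorphism $E/0_E^\cl \to E$ is the same as an endomorphism of $E$ that kills $0_E^\cl$. Since $R$ is complete, Matlis duality identifies $\End_R(E) = R$, under which such endomorphisms correspond exactly to $\Ann_R 0_E^\cl$. Combining with the first part gives $\cl^\dual(R) = \tau_\cl(R)$.
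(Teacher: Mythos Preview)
Your proof is correct. The paper itself does not supply a proof of this lemma; it is stated with citations to \cite{Perez2019} and \cite{EpRG2020} and used as a black box. Your argument recovers the result from scratch: the residuality reduction to $\bigcap_M \Ann_R 0_M^{\cl}$ is clean, the universality step correctly exploits that over a local ring every nonzero cyclic module surjects onto $k$ (this is where locality enters, not Nakayama per se, but the point is the same---$\Ann(rx)\subseteq m$), and the Matlis-duality identification $\End_R(E)=R$ for complete $R$ handles the corollary. One small remark: in the universality step you should check that functoriality is being applied to $f:M\to E$ with $L=0$, so that $f(0_M^{\cl})\subseteq 0_E^{\cl}$; you do say this, and it is exactly what is needed.
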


\begin{defn}[\cite{R.G.2016}]
\label{def:weaklyclregular}
Let $\cl$ be a residual closure operation on $R$-modules. We say that $R$ is weakly-$\cl$-regular if for every ideal $I$ of $R$, $I^{\cl}_R=I$, or equivalently for every pair of \fg\ $R$-modules $N \subseteq M$, $N_M^{\cl}=N$.
\end{defn}

Note that if $\cl$~captures colons, then weakly $\cl$-regular rings are \CM.

\begin{defn}[\cite{fndtc}]
Let $R$ be a local ring of characteristic $p>0$ and let $*$ denote tight closure. We say that $R$ is \textit{F-rational} if $I^*=I$ for any ideal $I$ generated by part of a system of parameters.
\end{defn}

The result below gave the first way of viewing F-rationality in terms of local cohomology:

\begin{thm}[{\cite[Proposition 4.1.4]{SmithThesis}}]
Let $(R,m)$ be a complete local \CM\ ring. Then $R$ is F-rational if and only if $\Ann_R\left(0_{\mH_m^d(R)}^*\right)=R$.
\end{thm}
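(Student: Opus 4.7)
The plan is to exploit the direct limit description
\[
\mH_m^d(R) \;=\; \varinjlim_{t} R/(x_1^t,\dots,x_d^t),
\]
where $x_1,\dots,x_d$ is a system of parameters (which exists and behaves well because $R$ is \CM). The transition maps are multiplication by $x_1\cdots x_d$, so an element represented by $r+(x_1^t,\dots,x_d^t)$ vanishes in $\mH_m^d(R)$ iff $(x_1\cdots x_d)^s r \in (x_1^{t+s},\dots,x_d^{t+s})$ for some $s\geq 0$; by the \CM\ property this colon collapses, so the element vanishes iff $r \in (x_1^t,\dots,x_d^t)$. Since $\Ann_R(M)=R$ iff $M=0$, the right-hand side of the equivalence says exactly $0^*_{\mH_m^d(R)}=0$.

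First I would prove the key correspondence: for every system of parameters $x_1,\dots,x_d$ and every $t\geq 1$, the class $[r+(x_1^t,\dots,x_d^t)] \in \mH_m^d(R)$ lies in $0^*$ if and only if $r \in (x_1^t,\dots,x_d^t)^*$. This uses that Frobenius on $\mH_m^d(R)$ sends this class to $[r^q + (x_1^{tq},\dots,x_d^{tq})]$, and the \CM\ colon-capturing argument above translates the two tight-closure conditions into each other.

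Now the forward direction is immediate: if $R$ is F-rational, then in particular $(x_1^t,\dots,x_d^t)$ is tightly closed for every full sop and every $t$, so by the correspondence every element of $0^*_{\mH_m^d(R)}$ is represented by a class that vanishes in the direct limit, giving $0^*_{\mH_m^d(R)}=0$. For the reverse direction, suppose $0^*_{\mH_m^d(R)}=0$ and let $I=(y_1,\dots,y_k)$ be any parameter ideal; I want $I^*=I$. Choose $y_{k+1},\dots,y_d$ so that $y_1,\dots,y_d$ is a full sop. For any $r\in(y_1,\dots,y_k)^*$, we have $r\in(y_1,\dots,y_k,y_{k+1}^n,\dots,y_d^n)^*$ for every $n\geq 1$, and the correspondence applied to the sop $y_1,\dots,y_k,y_{k+1}^{n},\dots,y_d^{n}$ (with $t=1$) together with the hypothesis shows each of those larger parameter ideals is tightly closed, so $r\in(y_1,\dots,y_k,y_{k+1}^n,\dots,y_d^n)$ for all $n$. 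Intersecting over $n$ and using that $y_{k+1},\dots,y_d$ is a regular sequence on $R/(y_1,\dots,y_k)$ (\CM\ hypothesis) gives $r\in(y_1,\dots,y_k)$.

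I expect the main technical step to be the correspondence between $0^*_{\mH_m^d(R)}$ and tight closure of parameter ideals; one must be careful about choosing a representative of an element of $\mH_m^d(R)$ and showing that the test element $c$ detecting tight closure in the local cohomology module can be taken uniformly, so that it also detects tight closure of the representative $r$ in $(x_1^t,\dots,x_d^t)$. Everything else is bookkeeping with the \CM\ hypothesis and Krull-type intersection arguments.
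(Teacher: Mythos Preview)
Your proof is correct. The paper does not itself prove this statement---it is cited as background from Smith's thesis---but the paper's Theorem~\ref{thm:clrat} and Corollary~\ref{cor:rattosop} establish the general version for an arbitrary residual functorial closure, and your argument is precisely the specialization of that proof to tight closure. Where the paper uses functoriality together with the injective finiteness Axiom~\ref{axiom:finite}, you instead prove the two-way correspondence $[r]\in 0^*_{\mH_m^d(R)}\Leftrightarrow r\in(x_1^t,\dots,x_d^t)^*$ directly via the Frobenius action on local cohomology; the remark following Theorem~\ref{thm:clrat} notes that this Frobenius computation is exactly what Smith used to verify the axiom for tight closure. Your intersection argument for partial parameter ideals is Corollary~\ref{cor:rattosop} verbatim.
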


This perspective has since been used by many others, for example Epstein and Schwede in \cite{EpScTightInterior} and Ma and Schwede in \cite{MaSc2020} for their BCM test ideal:

\begin{defn}[{\cite[Section 5]{MaSc2020}}]
\label{def:maschwedetestideal}
Let $(R,m,k)$ be a complete local ring of dimension $d$ with a normalized dualizing complex $\omega^\bullet$ and canonical module $\omega$. Let $B$ denote a big \CM\ $R$-algebra. We set
\[
0_{\mH^d_{\mm}(R)}^{\sB} = \set{\eta\in \mH^d_{\mm}(R)|\exists B \text{ such that } \eta\in 0^B_{\mH^d_{\mm}(R)}}
\]
and
\[
\tau_{\sB}(\omega_R) = \Ann_{\omega_R} 0^{\sB}_{\mH^d_{\mm}(R)}\subseteq \omega_R
\]
\end{defn}

Ma and Schwede go on to prove that this agrees with the parameter test submodule. This test submodule is given in terms of $\omega_R$ and $\mH_m^d(R)$, rather than in terms of $R$ and $E_R(k)$ as is the tight closure test ideal \cite{hochsterhuneke90,fndtc,testidealssurvey,EpScTightInterior} or the cl-test ideal \cite{Perez2019,EpRG2020,EpRGVNakayama}. 
If $R$ is Gorenstein, the two notions agree since $\omega_R \cong R$ and $E_R(k) \cong \mH_m^d(R)$. However, even when $R$ is not Gorenstein, the former test ideal is still an example of the duality of \cite{EpRG2020}. In Section \ref{sec:clrational} we will explore and apply this duality for residual closure operations.

\section{Full extended plus closure}
\label{sec:epf}

In this section, we prove that full extended plus closure has no big test elements and that its test ideals agree with those of weak full extended plus closure.

\begin{defn}[{\cite{HeitmannDirectSummand,R.G.2016}}]
Let $R$ be a domain. We define the \textit{absolute integral closure} $R^+$ of $R$ to be the integral closure of $R$ in an algebraic closure of its fraction field.

Let $R$ be a mixed characteristic local domain whose residue field has characteristic $p$. Let $N \subseteq M$ be modules over $R$. We define the \textit{full extended plus closure} of $N$ in $M$ to be the set of $u \in M$ such that there exists some nonzero $c \in R$ such that for all $n \in \Z_+$,
\[c^{1/n} \otimes u \in \text{im}\left(R^+ \otimes_R N + R^+ \otimes_R p^nM \to R^+ \otimes_R M\right).\]
\end{defn}

\begin{thm}
\label{thm:epf0}
Let $(R,m,k)$ be a complete local domain of mixed characteristic $p$ with $F$-finite residue field $k$. Then $\tau_{\epf}(R)=0$.
\end{thm}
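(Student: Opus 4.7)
The plan is to apply the duality lemma stated just before Definition \ref{def:maschwedetestideal}, which identifies $\tau_{\cl}(R)$ with $\Ann_R 0_{E_R(k)}^{\cl}$ for a functorial, residual closure on a complete local ring. Both properties hold for $\epf$ essentially by inspection: if $f: M \to M'$ is $R$-linear, then the induced map $R^+ \otimes f$ sends a witness $c^{1/n} \otimes u \in R^+ \otimes N + R^+ \otimes p^n M$ to one for $f(u)$ and $f(N)$, giving functoriality; and if $L \subseteq N \subseteq M$ with $\pi : M \twoheadrightarrow M/L$, then the preimage under $R^+ \otimes \pi$ of $R^+ \otimes (N/L) + R^+ \otimes p^n(M/L)$ is $R^+ \otimes N + R^+ \otimes p^n M + R^+ \otimes L = R^+ \otimes N + R^+ \otimes p^n M$ since $L \subseteq N$, giving residuality. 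So it suffices to prove $0_{E}^{\epf} = E$, where $E = E_R(k)$: for then $\tau_{\epf}(R) = \Ann_R E$, and $R$ being complete local gives $R \cong \Hom_R(E,E)$ by Matlis duality, forcing $\Ann_R E = 0$.

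The key observation is that every element of $E$ is infinitely $p$-divisible. Since $R$ is a domain and $p \neq 0$, multiplication by $p$ on $R$ is injective, so the short exact sequence
\[
0 \to R \xrightarrow{\ p\ } R \to R/pR \to 0
\]
dualizes under $\Hom_R(-,E)$ (exact, as $E$ is injective) to
\[
0 \to (R/pR)^{\vee} \to E \xrightarrow{\ p\ } E \to 0.
\]
Hence multiplication by $p$ on $E$ is surjective, and inductively $p^n E = E$ for every $n \in \Z_+$.

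Now fix any $\eta \in E$. For each $n$, pick $\eta_n \in E$ with $p^n \eta_n = \eta$. Inside $R^+ \otimes_R E$ we have $1 \otimes \eta = 1 \otimes p^n \eta_n = p^n \otimes \eta_n$, which is the image of $1 \otimes \eta \in R^+ \otimes_R p^n E$ under the natural map $R^+ \otimes_R p^n E \to R^+ \otimes_R E$ (equivalently, it lies in $p^n(R^+ \otimes_R E)$). Taking $c = 1 \in R$ as the constant in Heitmann's definition, we get $c^{1/n} \otimes \eta = 1 \otimes \eta \in \mrm{im}(R^+ \otimes_R p^n E \to R^+ \otimes_R E)$ for every $n$, so $\eta \in 0_E^{\epf}$. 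Therefore $0_E^{\epf} = E$, and $\tau_{\epf}(R) = \Ann_R E = 0$ as desired.

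The only step I would scrutinize is the verification of functoriality and residuality for $\epf$; if these were subtle (e.g.\ if the $R^+$-sum in the image were not preserved well under some operation), a detour through \cite{Perez2019} or \cite{EpRG2020} would be needed. Otherwise the argument is almost entirely formal: the content is the Matlis-dual observation that a domain in mixed characteristic makes $E$ a $p$-divisible module, which matches $\epf$'s appetite for $p^n$-approximations so cleanly that one can take $c = 1$. Notably, $F$-finiteness of $k$ does not seem to enter this core argument, so I would expect it to appear only as a standing hypothesis of the section rather than as a genuine requirement here.
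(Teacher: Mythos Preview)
Your proof is correct and follows essentially the same route as the paper's: Matlis-dualize injectivity of $p^n$ on the domain $R$ to get $p^nE=E$, conclude $0_E^{\epf}=E$ (you take $c=1$; the paper phrases it as $d^\epsilon\otimes u$ for any $d$), and then read off $\tau_{\epf}(R)=\Ann_R E=0$. Your side remarks are also on point: the paper cites \cite{R.G.2016} elsewhere for functoriality and residuality of $\epf$, and the $F$-finiteness hypothesis plays no role in this argument.
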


\begin{proof}
Since $R$ is a domain, the map from $R$ to itself given by multiplication by $p^n$ is injective. Taking its Matlis dual, we see that the map from $E=E_R(k)$ to itself given by multiplication by $p^n$ is surjective. This is preserved by tensoring over $R$ with $R^+$. So for any element $u \in E$, $d^\epsilon \otimes u \in R^+ \otimes p^nE$ for $\epsilon>0$ rational and $d \in R^+$. Hence $0_E^{\epf}=E$, which implies that $\Ann_R(0_E^{\epf})=\Ann_R E=0$. \qedhere
\end{proof}

\begin{defn}[{\cite[Definition 4.1]{jiang}}]
    Let $R$ be a complete local ring whose residue field has characteristic $p>0$. We define the \textit{weak full extended plus closure} of $N$ in $M$ to be 
    \[N_M^{\wepf}:=\bigcap_{n \geqslant 0} (N+p^nM)_M^{\epf}.\]
\end{defn}

\begin{cor}
Let $R$ be a complete local domain of mixed characteristic $p$ with $F$-finite residue field. Then $\tau_{\wepf}(R)=0$.
\end{cor}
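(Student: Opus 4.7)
The plan is to derive the corollary directly from Theorem \ref{thm:epf0} by observing that $\wepf$ is a \emph{larger} closure than $\epf$, and that larger closures give smaller test ideals. This reduces the corollary to a one-line comparison argument.

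First I would verify the containment $\epf \leqslant \wepf$. For any $R$-modules $N \subseteq M$ and any $n \ge 0$, we have $N \subseteq N + p^n M$, so by monotonicity of $\epf$ in its submodule argument (property (3) in the definition of a closure operation),
\[
N_M^{\epf} \subseteq (N + p^n M)_M^{\epf}.
\]
Intersecting over all $n \ge 0$ yields $N_M^{\epf} \subseteq N_M^{\wepf}$, which is the desired comparison.

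Next I would record the general principle that if $\cl \leqslant \cl'$ are closure operations on the same category of $R$-modules, then $\tau_{\cl'}(R) \subseteq \tau_{\cl}(R)$: for each $N \subseteq M$, the inclusion $N_M^{\cl} \subseteq N_M^{\cl'}$ reverses on colons to give $N :_R N_M^{\cl'} \subseteq N :_R N_M^{\cl}$, and intersecting over all pairs $N \subseteq M$ preserves this containment.

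Applying this principle with $\cl = \epf$ and $\cl' = \wepf$, together with Theorem \ref{thm:epf0}, yields
\[
\tau_{\wepf}(R) \subseteq \tau_{\epf}(R) = 0,
\]
and hence $\tau_{\wepf}(R) = 0$. There is no substantive obstacle here; the only thing to be careful about is that the monotonicity property used is built into the definition of a closure operation, so the comparison $\epf \leqslant \wepf$ is immediate and no additional hypothesis on $R$ beyond those already in Theorem \ref{thm:epf0} is needed.
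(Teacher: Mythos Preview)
Your proof is correct and follows essentially the same idea as the paper's: both deduce the result from Theorem~\ref{thm:epf0} via the containment $\epf \leqslant \wepf$. The paper phrases this at the level of $E$, computing $0_E^{\wepf} = \bigcap_n (p^nE)_E^{\epf} \supseteq 0_E^{\epf} = E$ directly from the definition, whereas you package the same monotonicity step as the general principle that larger closures yield smaller test ideals; the underlying argument is identical.
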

\begin{proof}
    Since $0_E^{\wepf} = \bigcap_n (p^nE)^{\epf}_E$ and $E=0_E^{\epf} \subseteq (p^nE)^{\epf}_E\subseteq E$, we have 
    \[0_E^{\wepf} = \bigcap_n (p^nE)_E^{\wepf} = \bigcap_n E=E. \qedhere \]
\end{proof}
 
 However, as a corollary of \cite[Corollary 4.2]{Ma2019a}, we see that full extended plus closure does have finitistic test elements:
 
\begin{cor}
Let $(R, \mm)$ be a complete normal local domain
of residue characteristic $p > 0$ and of dimension $d$. Let $J$ be the defining ideal of the singular
locus of $R$. Then there exists an integer $N$ such that $J^N I^{\epf} \subseteq I$ for all $I\subseteq R$ .
\end{cor}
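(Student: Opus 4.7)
The plan is to deduce the bound from \cite[Corollary 4.2]{Ma2019a} by exhibiting epf-closure as dominated by a big Cohen-Macaulay closure to which Ma's result applies. The candidate BCM algebra is $B:=\widehat{R^+}$, the $p$-adic completion of the absolute integral closure, which by Andr\'e's theorem is a big Cohen-Macaulay $R$-algebra in mixed characteristic. I expect \cite[Corollary 4.2]{Ma2019a} to produce an integer $N$ such that $J^{N}$ annihilates the (almost) $B$-closure of any ideal modulo itself, i.e. $J^{N}(IB\cap R)\subseteq I$ for every $I\subseteq R$, phrased in terms of an almost-BCM closure along a nonzero element $c\in R$.

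The main step of the argument is the comparison $I^{\epf}\subseteq I^{\mathrm{cl}_{B}}$, where $\mathrm{cl}_{B}$ denotes the relevant (almost) BCM closure. If $u\in I^{\epf}$, then by definition there is $c\in R\setminus\{0\}$ with
\[
c^{1/n}u\in IR^{+}+p^{n}R^{+}\qquad\text{for every }n\in\ZZ_{+}.
\]
Applying the canonical map $R^{+}\to B$ gives $c^{1/n}u\in IB+p^{n}B$ for all $n$. Since the fractional powers $\{c^{1/n}\}_{n}$ generate the same almost-mathematics ideal as $c$ itself, and $B$ is $p$-adically separated, this places $u$ in the $c$-almost closure of $I$ along $B$, which is precisely the closure appearing in Ma's corollary.

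Chaining the two steps then yields $J^{N}I^{\epf}\subseteq J^{N}I^{\mathrm{cl}_{B}}\subseteq I$, proving the result. The main obstacle will be making the containment $I^{\epf}\subseteq I^{\mathrm{cl}_{B}}$ precise: one must check that the fractional-exponent clause in the definition of $\epf$ really does translate faithfully into the almost-mathematics formalism governing Ma's corollary, in particular that the $p$-adic limit argument used to pass from $R^{+}$ to $\widehat{R^{+}}$ is compatible with the $c$-almost structure used in \cite[Corollary 4.2]{Ma2019a}. Once this translation is in place, no further work is needed beyond a direct citation.
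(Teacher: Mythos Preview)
Your strategy---compare $\epf$ to a BCM-type closure and invoke \cite[Corollary~4.2]{Ma2019a}---matches the paper's in spirit, but two points need correction. First, Ma's Corollary~4.2 is stated for $\overline{I^h}$, not for an almost-$B$-closure, so it cannot be cited as a black box here; the paper instead extracts from Ma's \emph{proof} the two ingredients it needs, namely (i) $I^{\epf}\subseteq (I,p^n)B\cap R$ for all $n$ and some fixed perfectoid big Cohen--Macaulay $R^+$-algebra $B$, and (ii) $J^N\subseteq\Ima(\Hom_R(B,R)\to R)$ for some $N$, and then reruns Ma's trace-map argument with $\overline{I^h}$ replaced by $I^{\epf}$.

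Second, the step you flag as the main obstacle---getting from $c^{1/n}u\in IB+p^nB$ to an honest containment $u\in IB\cap R$---is genuinely problematic as stated (note also that $c$ depends on $u$, so there is no single ``$c$-almost closure'' containing all of $I^{\epf}$). The paper sidesteps this entirely by never discarding the $p^n$ term: it keeps the containment $I^{\epf}\subseteq (I,p^n)B\cap R$, applies a map $\phi\in\Hom_R(B,R)$ with $\phi(1)=r\in J^N$ to obtain $rI^{\epf}\subseteq(I,p^n)R$ for every $n$, and only then intersects over $n$ in the Noetherian ring $R$ to conclude $rI^{\epf}\subseteq I$. No de-almostification is required.
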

\begin{proof}
From the proof of \cite[Corollary 4.2]{Ma2019a}, we have
\begin{itemize}
    \item $I^{\epf}\subseteq (I,p^n)B\cap R$ for some fixed perfectoid big Cohen-Macaulay $R^+$-algebra $B$ and every $n$;
    \item There exists some $N$ such that $J^N\subseteq \Ima(\Hom_R(B,R)\to R)$.
\end{itemize}
Then the last paragraph of the proof of \cite[Corollary 4.2]{Ma2019a} works with $\overline{I^h}$ replaced by $I^{\epf}$. Explicitly, for every $r\in J^N$, there exists $\phi\in \Hom_R(B,R)$ such that $\phi(1)=r$. Applying $\phi$ to $I^{\epf}\subseteq (I,p^n)B\cap R$ we get $r I^{\epf}\subseteq (I,p^n)R$ for every $n$. Hence $J^N I^{\epf}\subseteq \bigcap_n (I,p^n)R=I$.
\end{proof}

\begin{thm}
Let $R$ be a complete local domain of mixed characteristic $p>0$. Then the finitistic test ideals for $\epf$ and $\wepf$ are the same. 
\end{thm}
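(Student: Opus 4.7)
The plan is to prove the equality by establishing the two containments separately. For the easy direction $\tau_{\wepf}^{fg}(R) \subseteq \tau_{\epf}^{fg}(R)$, I would observe that $N \subseteq N + p^nM$ for every $n \geq 0$, so monotonicity of the $\epf$ closure yields $N_M^{\epf} \subseteq (N + p^nM)_M^{\epf}$. Intersecting over $n$ gives $N_M^{\epf} \subseteq N_M^{\wepf}$, hence $N :_R N_M^{\wepf} \subseteq N :_R N_M^{\epf}$ for every pair $N \subseteq M$, and taking intersections over all f.g.\ pairs yields the inclusion of test ideals.

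For the reverse inclusion $\tau_{\epf}^{fg}(R) \subseteq \tau_{\wepf}^{fg}(R)$, I would fix $c \in \tau_{\epf}^{fg}(R)$, a f.g.\ pair $N \subseteq M$, and some $u \in N_M^{\wepf}$, and show that $cu \in N$. Unwinding the definition of $\wepf$ gives $u \in (N + p^nM)_M^{\epf}$ for every $n \geq 0$. Since $N + p^nM$ is a f.g.\ submodule of the f.g.\ module $M$, the finitistic test-element hypothesis on $c$ applies to the pair $N + p^nM \subseteq M$, producing $cu \in N + p^nM$ for every $n$. I would then invoke Krull's intersection theorem on the f.g.\ $R$-module $M/N$: because $p \in \mm$ and $R$ is Noetherian local, $\bigcap_n p^n(M/N) = 0$, equivalently $\bigcap_n (N + p^nM) = N$. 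Hence $cu \in N$, so $c \in \tau_{\wepf}^{fg}(R)$.

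The argument is a direct unwinding of definitions followed by Krull's intersection, so I do not anticipate a serious obstacle. The small point that must be checked is that $N + p^nM$ is finitely generated whenever $N$ and $M$ are, so that the hypothesis $c \in \tau_{\epf}^{fg}(R)$ may legitimately be applied at each level; this is immediate. It is worth noting that the finite generation of $M/N$ is used essentially in Krull's intersection step, and the argument genuinely collapses without it, consistent with the paper's earlier computation that $\tau_{\epf}(R) = \tau_{\wepf}(R) = 0$ witnessed by the non-f.g.\ module $E_R(k)$.
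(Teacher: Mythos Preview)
Your proposal is correct and follows essentially the same route as the paper: both directions match, with the key step for $\tau_{\epf}^{fg}(R) \subseteq \tau_{\wepf}^{fg}(R)$ being to apply the test-element hypothesis to each $N+p^nM$ (resp.\ $(I,p^n)$) and then intersect over $n$ via Krull. The only difference is cosmetic: the paper states the argument only for ideals $I\subseteq R$ and leaves the Krull step implicit, whereas you treat arbitrary finitely generated pairs $N\subseteq M$ and name Krull's intersection theorem explicitly.
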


\begin{proof} 
Since $\epf\leqslant \wepf$ \cite[Remark 4.2]{jiang}, any element in $\tau_{\wepf}^{fg}(R)$ will also be in $\tau_{\epf}^{fg}(R)$. For the reverse containment, if $c \in \tau_{\epf}^{fg}(R)$, then $c(I,p^n)^{\epf}\subseteq (I,p^n)$ for all $n>0$. For any $u\in I^{\wepf} = \bigcap_{n\in\NN} (I,p^n)^{\epf}$, we have $cu\in (I,p^n)$ for all $n$, which in particular implies that $cu\in I$. Hence, $c\in \tau_{\wepf}^{fg}(R)$. 
\end{proof}

\begin{rmk}
 We do not know if or where these two closure operations agree, despite the above result. If $\epf=\wepf$ in general, then this implies that $\epf$ is a Dietz closure \cite{dietz}. If they do not always agree, then they provide an interesting example of distinct closures with the same finitistic and big test ideals. 
\end{rmk}

\begin{rmk}
    The results above indicate that the finitistic and big test ideals do not coincide for either epf or wepf. This gives a partial answer to \cite[Question 3.7]{Perez2019}, which asks which closure operations have the property that the big and finitistic test ideals coincide. For finitistic closure operations like Frobenius and plus closure, the big and finitistic test ideals are known to coincide, and it is conjectured that they coincide for tight closure.
\end{rmk}

\section{Canonical modules and cl-rationality}
\label{sec:clrational}

In this section we define cl-rational singularities for residual, functorial closure operations cl over \CM\ local rings. Under hypotheses comparable to those used for F-rationality, we show that cl-rational rings are those rings whose parameter ideals are cl-closed.

We begin by setting up the necessary results from closure-interior duality.

\begin{defn}
    Let $(R,\mm,k)$ be a complete local ring. Write $\vee$ for the Matlis duality operator $\Hom_R(-,E)$, where $E$ is the injective hull of $k$. If $M$ is a \fg\ or Artinian $R$-module and $N \subseteq M^\vee$, we define
    \[\Ann_M N:=\{f \in \Hom_R(M^\vee,E) \mid f(N)=0\}.\]
    Note that by the isomorphism $M^{\vee \vee} \cong M$, we may view this as a submodule of $M$.
\end{defn}

\begin{prop}
Let $(R,\mm,k)$ be a complete local ring and $M$ a \fg\ or Artinian $R$-module. Then for any module $M$ that is \fg\ or Artinian and any submodule $N\subseteq M^{\vee}$ we have
\[
\left(\frac{ M^{\vee} }{ N} \right)^{\vee} = \Ann_M N
\]
\end{prop}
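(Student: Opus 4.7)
The plan is to apply the Matlis duality functor $\vee = \Hom_R(-,E)$ directly to the short exact sequence
\[
0 \longrightarrow N \longrightarrow M^{\vee} \longrightarrow M^{\vee}/N \longrightarrow 0.
\]
Since $E$ is injective over $R$, the functor $\Hom_R(-,E)$ is exact, so applying it yields the short exact sequence
\[
0 \longrightarrow (M^{\vee}/N)^{\vee} \longrightarrow M^{\vee\vee} \longrightarrow N^{\vee} \longrightarrow 0.
\]
By definition, the middle-to-right map is the restriction map $f \mapsto f|_N$ from $\Hom_R(M^{\vee},E)$ to $\Hom_R(N,E)$, so its kernel is exactly $\{f \in \Hom_R(M^{\vee},E) \mid f(N)=0\}$, which is $\Ann_M N$ before identification.

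The remaining step is to identify $M^{\vee\vee}$ with $M$. Here is where the hypothesis that $M$ is finitely generated or Artinian is used: in that setting Matlis duality is an involution, so the natural evaluation map $M \to M^{\vee\vee}$ is an isomorphism. Transporting the inclusion $(M^{\vee}/N)^{\vee} \hookrightarrow M^{\vee\vee}$ along this isomorphism then realizes $(M^{\vee}/N)^{\vee}$ as a submodule of $M$, and by the identification in the previous paragraph this submodule is precisely $\Ann_M N$.

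There is essentially no obstacle: the proof is a direct application of the exactness of $\Hom_R(-,E)$ combined with the reflexivity of $M$ under Matlis duality. The only thing one has to be careful about is being consistent about whether $\Ann_M N$ is regarded abstractly as a subset of $\Hom_R(M^\vee, E)$ or, via the canonical evaluation isomorphism $M \cong M^{\vee\vee}$, as a submodule of $M$ itself; the statement as formulated in the preceding definition makes the latter identification, and this matches the kernel produced by the dualized short exact sequence.
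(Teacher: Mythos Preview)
Your proof is correct and is essentially the same argument as the paper's, just phrased in terms of exact sequences rather than element-by-element: both identify $(M^\vee/N)^\vee$ with the maps $M^\vee \to E$ vanishing on $N$ (using injectivity of $E$) and then invoke the Matlis duality isomorphism $M \cong M^{\vee\vee}$ to view this as $\Ann_M N$.
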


\begin{proof}
\begin{align*}
    f\in \left(\frac{ M^{\vee} }{ N} \right)^{\vee} & \iff f\in \Hom_R\left(\frac{ M^{\vee} }{ N},E \right)\\
    &\iff  f\in \Hom_R\left( M^{\vee},E \right)\text{ and $f$ kills }N\\
    &\iff  f\in (M^{\vee})^{\vee}=M\text{ and $f$ kills }N\\
    &\iff  f\in \Ann_M (N). \qedhere
\end{align*}
\end{proof}

\begin{cor}
\label{cor:dualclosure}
Let $R$ be a complete local ring and $\cl$ a closure operation on $R$-modules. Then for $M$ a \fg\ or Artinian $R$-module,
\[
\left(\frac{ M^{\vee} }{ 0_{M^{\vee}}^{\cl}} \right)^{\vee} = \Ann_M (0_{M^{\vee}}^{\cl}).
\]
In particular, 
\[
\left(\frac{ \mH_m^d(R) }{ 0_{\mH_m^d(R)}^{\cl}} \right)^{\vee} = \Ann_{\omega} (0_{\mH_m^d(R)}^{\cl}),
\]
where $\omega$ is the canonical module of $R$ and $R$ has Krull dimension $d$.
\end{cor}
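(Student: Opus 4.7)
The plan is to view this corollary as a direct specialization of the preceding proposition. For the first equality, the key observation is that by axiom (1) of the definition of a closure operation, $0_{M^{\vee}}^{\cl}$ is a submodule of $M^{\vee}$, so I can substitute $N := 0_{M^{\vee}}^{\cl}$ into the conclusion of the previous proposition to obtain
\[
\left(\frac{M^{\vee}}{0_{M^{\vee}}^{\cl}}\right)^{\vee} = \Ann_M (0_{M^{\vee}}^{\cl})
\]
immediately. The only thing to track is the tacit identification $M \cong M^{\vee\vee}$, used so that $\Ann_M$ genuinely lands inside $M$ in the sense required by the definition immediately preceding the proposition.

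For the ``in particular'' claim, my approach is to apply the first equality with $M := \omega$, the canonical module of $R$. Two ingredients are needed: first, $\omega$ is finitely generated, so the hypothesis of the first equality is satisfied; second, local duality for complete local rings supplies a canonical isomorphism $\omega^{\vee} \cong \mH_m^d(R)$ (equivalently, $\mH_m^d(R)^{\vee} \cong \omega$). Substituting these identifications into the first equality yields the displayed formula.

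I do not anticipate a substantive obstacle, since both parts are formal consequences of the preceding proposition combined with standard Matlis and local duality. The one pitfall to watch for is bookkeeping: one must ensure that each instance of $\vee$ is applied to the correct module and in the correct order so that the right-hand annihilator truly sits inside $\omega$ rather than inside a notationally distinct but isomorphic module, and so that the closure $0_{M^{\vee}}^{\cl}$ on the left is the same object as $0_{\mH_m^d(R)}^{\cl}$ on the right under the local duality isomorphism.
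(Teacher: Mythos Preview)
Your proposal is correct and matches the paper's approach exactly: the paper's proof is the single line ``The first part follows by setting $N=0_{M^\vee}^{\cl}$,'' leaving the ``in particular'' statement implicit, whereas you spell out the local duality identification $\omega^\vee \cong \mH_m^d(R)$ that justifies it.
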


\begin{proof}
The first part follows by setting $N=0_{M^\vee}^{\cl}$.
\end{proof}

\begin{cor}
\label{cor:whenmoduleis0}
Let $R$ be a complete local ring and $M$ a \fg\ or Artinian $R$-module. Let $N \subseteq M^\vee$. Then $N=0$ if and only if $\Ann_M N=M$.
\end{cor}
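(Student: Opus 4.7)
The plan is to deduce the corollary directly from the preceding proposition together with the reflexivity of Matlis duality on finitely generated and Artinian modules.

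First note that if $M$ is finitely generated (respectively Artinian) over the complete local ring $R$, then $M^\vee$ is Artinian (respectively finitely generated), and the same is true of any submodule or quotient. Consequently $N$ and $M^\vee/N$ both belong to a category on which $(-)^{\vee\vee}$ is naturally isomorphic to the identity, so I may freely apply Matlis duality to them.

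For the forward direction, if $N = 0$ then $M^\vee/N = M^\vee$, so by the preceding proposition
\[
\Ann_M N = \left( M^\vee / N \right)^\vee = (M^\vee)^\vee = M.
\]
For the reverse direction, I would dualize the short exact sequence
\[
0 \longrightarrow N \longrightarrow M^\vee \longrightarrow M^\vee/N \longrightarrow 0,
\]
using exactness of Matlis duality on the category of finitely generated/Artinian $R$-modules, together with the identification $(M^\vee/N)^\vee = \Ann_M N$ from the preceding proposition, to obtain
\[
0 \longrightarrow \Ann_M N \longrightarrow M \longrightarrow N^\vee \longrightarrow 0.
\]
Then the assumption $\Ann_M N = M$ forces $N^\vee = 0$, and applying $(-)^\vee$ once more (using reflexivity on $N$) yields $N = N^{\vee\vee} = 0$.

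There is no real obstacle here: the content is entirely packaged in the previous proposition and in the fact that Matlis duality is an exact, reflexive contravariant equivalence between finitely generated and Artinian modules over a complete local ring. The only small point to check carefully is that $N$ itself lies in the reflexive category so that $N^\vee = 0$ actually implies $N = 0$, which follows because submodules of Artinian modules are Artinian and submodules of finitely generated modules over a Noetherian ring are finitely generated.
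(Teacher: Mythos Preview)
Your proof is correct and follows essentially the same route as the paper's. The paper argues the reverse direction by noting that any map $N \to E$ extends (by injectivity of $E$) to an element of $M^{\vee\vee}\cong M$, which by hypothesis kills $N$, whence $N^\vee=0$ and so $N=0$; your dualized short exact sequence is exactly this argument repackaged, with the extension step encoded as exactness of $\Hom_R(-,E)$ and the identification $(M^\vee/N)^\vee=\Ann_M N$ supplied by the preceding proposition.
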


\begin{proof}
The forward direction is immediate. For the reverse direction, note that since $E$ is injective, any map $N \to E$ extends to a map $M^\vee \to E$, i.e., an element of $M^{\vee \vee} \cong M$. By hypothesis, this map kills $N$. Hence, $\Hom_R(N,E)=0$, which implies that $N=0$.
\end{proof}

\begin{cor}
\label{cor:omegainterior}
Let $R$ be a complete local ring of Krull dimension $d$ with canonical module $\omega$ and $\cl$ a residual closure operation on $R$-modules. Then
\begin{align*}
\cl^\dual(\omega) 
=\Ann_{\omega} 0_{\mH_m^d(R)}^{\cl}. 
\end{align*}
\end{cor}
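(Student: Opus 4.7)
The plan is to unwind the definition of $\cl^\dual$ applied to $\omega$ and then invoke the previous corollary directly. By \Cref{defn:dual},
\[
\cl^\dual(\omega) = \left( \frac{\omega^\vee}{0_{\omega^\vee}^{\cl}} \right)^\vee.
\]
So the whole content of the statement is the identification $\omega^\vee \cong \mH_m^d(R)$, combined with the "in particular" case of \Cref{cor:dualclosure}.

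The first step is therefore to justify that identification. Since $R$ is a complete local Cohen--Macaulay ring (as assumed throughout this section) of Krull dimension $d$ with canonical module $\omega$, local duality yields a canonical isomorphism $\omega^\vee \cong \mH_m^d(R)$; dually, $\mH_m^d(R)^\vee \cong \omega$ because $\omega$ is finitely generated and $\mH_m^d(R)$ is Artinian, so Matlis duality is an involution on each. The functoriality of $\cl$ on residual closure operations, together with the isomorphism $\omega^\vee \cong \mH_m^d(R)$, carries $0^{\cl}_{\omega^\vee}$ to $0^{\cl}_{\mH_m^d(R)}$.

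Substituting into the display above gives
\[
\cl^\dual(\omega) = \left( \frac{\mH_m^d(R)}{0_{\mH_m^d(R)}^{\cl}} \right)^\vee,
\]
and the "in particular" assertion of \Cref{cor:dualclosure} identifies the right-hand side with $\Ann_\omega 0_{\mH_m^d(R)}^{\cl}$, completing the proof.

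There is no real obstacle here; the corollary is essentially a bookkeeping statement that repackages \Cref{cor:dualclosure} in the language of the dual interior operation $\cl^\dual$. The only substantive input, beyond the definition of $\cl^\dual$, is the standard Matlis-duality identification $\omega^\vee \cong \mH_m^d(R)$ valid in the complete local Cohen--Macaulay setting.
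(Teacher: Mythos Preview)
Your proof is correct and follows the same approach as the paper, which simply says the result follows from \Cref{cor:dualclosure} and \Cref{defn:dual}. One small inaccuracy: Cohen--Macaulayness is \emph{not} assumed throughout the section (it enters only from \Cref{def:clrational} onward), and the statement of \Cref{cor:omegainterior} does not assume it; fortunately the identification $\omega^\vee \cong \mH_m^d(R)$ holds for any complete local ring with a canonical module, since $\omega$ is by definition the Matlis dual of the Artinian module $\mH_m^d(R)$, so your argument goes through without that hypothesis (and without needing to invoke functoriality, which is likewise not assumed here).
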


\begin{proof}
This follows from Corollary \ref{cor:dualclosure} and Definition \ref{defn:dual}.
\end{proof}

As a result, we see that $\cl^{\dual}(\omega)$ gives us a test submodule generalizing Definition \ref{def:maschwedetestideal} in the same way that the dual of $\cl^{\dual}(R)$ gives us the traditional test ideal as in \cite{EpScTightInterior,EpRG2020}.

\begin{defn}
    We define $\tau_{\cl}(\omega):=\Ann_\omega 0_{\mH_m^d(R)}^{\cl}$.
\end{defn}

We apply this to define cl-rationality:

\begin{defn}
\label{def:clrational}
Let $R$ be a Cohen-Macaulay complete local ring with canonical module $\omega$, and $\cl$ a closure operation on $R$-modules. We say that $R$ is $\cl$-rational if $\tau_{\cl}(\omega)=\omega$.
\end{defn}

\begin{lem}
\label{lem:0closureis0}
Let $R$ be a \CM\ complete local ring of Krull dimension $d$ with canonical module $\omega$ and $\cl$ a residual closure operation on $R$-modules. Then $R$ is $\cl$-rational if and only if $0_{\mH_{m}^d(R)}^{\cl}=0$.
\end{lem}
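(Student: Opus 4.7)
The plan is to observe that this is essentially a direct application of Corollary \ref{cor:whenmoduleis0}, once we unwind the definitions and invoke local duality. By Definition \ref{def:clrational}, $R$ is $\cl$-rational precisely when $\tau_{\cl}(\omega) = \omega$, and by the definition of $\tau_{\cl}(\omega)$ this reads $\Ann_\omega 0_{\mH_m^d(R)}^{\cl} = \omega$. So the equivalence we must prove is
\[
\Ann_\omega 0_{\mH_m^d(R)}^{\cl} = \omega \iff 0_{\mH_m^d(R)}^{\cl} = 0.
\]

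The first step is to put ourselves in the setting of Corollary \ref{cor:whenmoduleis0}. Take $M = \omega$, which is a \fg\ $R$-module since $R$ is complete local and $\omega$ exists in this \CM\ setting. Local duality on a \CM\ complete local ring gives $\omega^\vee \cong \mH_m^d(R)$, so we may regard $0_{\mH_m^d(R)}^{\cl}$ as a submodule $N \subseteq M^\vee$.

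With these identifications, Corollary \ref{cor:whenmoduleis0} applied to this $M$ and $N$ yields exactly the biconditional displayed above: $N = 0$ iff $\Ann_M N = M$. Substituting back, $0_{\mH_m^d(R)}^{\cl} = 0$ iff $\tau_{\cl}(\omega) = \omega$, which is $\cl$-rationality.

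There is no real obstacle here; the only thing to be careful about is that the residual hypothesis on $\cl$ is not actually needed for the biconditional itself (Corollary \ref{cor:whenmoduleis0} is stated for arbitrary submodules of $M^\vee$), but it is natural to keep it in the hypothesis since $\tau_{\cl}(\omega)$ is meaningful in the residual case and is what is being generalized from \cite{MaSc2020}. The proof is therefore essentially a one-line invocation of \Cref{cor:whenmoduleis0} together with local duality $\omega^\vee \cong \mH_m^d(R)$.
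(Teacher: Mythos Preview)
Your proposal is correct and follows essentially the same route as the paper: the paper's proof is the one-line remark that the lemma follows from Definition~\ref{def:clrational} and Corollary~\ref{cor:whenmoduleis0}, which is exactly the invocation you spell out (with the identification $\omega^\vee \cong \mH_m^d(R)$ implicit in the paper's setup via Corollary~\ref{cor:dualclosure}).
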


\begin{proof}
    This follows from Definition \ref{def:clrational} and Corollary \ref{cor:whenmoduleis0}.
\end{proof}

\begin{prop}
Let $R$ be a \CM\ complete local ring of Krull dimension $d$ with canonical module $\omega$. Let $\cl \leqslant \cl'$ be closure operations on $R$-modules. If $R$ is $\cl'$-rational, then $R$ is $\cl$-rational.
\end{prop}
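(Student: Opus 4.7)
The plan is to reduce the statement to a trivial inclusion of $\cl$-closures of $0$ inside $\mH_m^d(R)$, using the characterization of $\cl$-rationality developed just above. First, I would unwind the definition: $R$ is $\cl$-rational precisely when $\tau_{\cl}(\omega) = \Ann_\omega 0_{\mH_m^d(R)}^{\cl} = \omega$. Since $R$ is complete and Cohen--Macaulay, local duality gives $\omega^\vee \cong \mH_m^d(R)$, so Corollary \ref{cor:whenmoduleis0} (applied with $M = \omega$ and $N = 0_{\mH_m^d(R)}^{\cl} \subseteq \omega^\vee$) shows that $\Ann_\omega 0_{\mH_m^d(R)}^{\cl} = \omega$ if and only if $0_{\mH_m^d(R)}^{\cl} = 0$. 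This is precisely the content of Lemma \ref{lem:0closureis0}, and it applies to both $\cl$ and $\cl'$.

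Given this reformulation, the proof is immediate. By the definition of the ordering $\cl \leqslant \cl'$ applied to the submodule $0 \subseteq \mH_m^d(R)$, we have the inclusion
\[
0_{\mH_m^d(R)}^{\cl} \subseteq 0_{\mH_m^d(R)}^{\cl'}.
\]
If $R$ is $\cl'$-rational, the right-hand side is $0$, forcing $0_{\mH_m^d(R)}^{\cl} = 0$ as well, and hence $R$ is $\cl$-rational. There is essentially no obstacle here; the only subtlety worth flagging is making sure the characterization via vanishing of $0_{\mH_m^d(R)}^{\cl}$ is available for both closures without any residuality hypothesis, which is exactly what Corollary \ref{cor:whenmoduleis0} provides.
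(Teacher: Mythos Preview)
Your proof is correct and follows essentially the same idea as the paper. The paper's argument is marginally more direct: it observes that $\cl \leqslant \cl'$ forces $\tau_{\cl'}(\omega) \subseteq \tau_{\cl}(\omega)$ (since annihilators reverse the inclusion $0_{\mH_m^d(R)}^{\cl} \subseteq 0_{\mH_m^d(R)}^{\cl'}$), so $\tau_{\cl'}(\omega)=\omega$ immediately gives $\tau_{\cl}(\omega)=\omega$, without first passing through the vanishing characterization of Lemma~\ref{lem:0closureis0}.
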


\begin{proof}
If $\cl \leqslant \cl'$, then $\tau_{\cl'}(\omega) \subseteq \tau_{\cl}(\omega)$. So if $\tau_{\cl'}(\omega)=\omega$, then $\tau_{\cl}(\omega)=\omega$.
\end{proof}

\begin{axiom}\label{axiom:finite}
Let $(R,m)$ be a \CM\ local ring of Krull dimension $d$ with canonical module $\omega$, and $\cl$ a residual closure operation on $R$-modules. We say that $\cl$ satisfies the  \textit{injective finiteness condition} if $0_{\mH_m^d(R)}^{\cl}=0_{\mH_m^d(R)}^{\cl_{fg}}$. In particular, finitistic closure operations satisfy this condition.
\end{axiom}

The following result indicates that our definition of $\cl$-rationality often coincides with the more ideal-theoretic definition, as in the case of F-rationality \cite{smith1994tight,fndtc}.

\begin{thm}
\label{thm:clrat}
Let $R$ be a \CM\ complete local ring with canonical module $\omega$, and $\cl$ a residual, functorial closure operation on $R$-modules. If $R$ is $\cl$-rational, then every ideal generated by a system of parameters is $\cl$-closed. If in addition $\cl$ satisfies Axiom \ref{axiom:finite}, then the converse holds.
\end{thm}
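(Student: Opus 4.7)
The plan is to use Lemma \ref{lem:0closureis0} to translate $\cl$-rationality into the vanishing statement $0_{\mH_m^d(R)}^{\cl}=0$, and then to shuttle between this vanishing and the $\cl$-closedness of parameter ideals using the standard CM description of top local cohomology as the direct limit
\[
\mH_m^d(R)=\varinjlim_t R/(x_1^t,\ldots,x_d^t),
\]
where $x_1,\ldots,x_d$ is any system of parameters. Residuality and functoriality of $\cl$ will do the rest on each side; the converse additionally needs Axiom \ref{axiom:finite} to reduce to finitely generated submodules of $\mH_m^d(R)$.

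For the forward direction, fix a system of parameters $x_1,\ldots,x_d$, put $I=(x_1,\ldots,x_d)$, and let $\phi\colon R/I\to \mH_m^d(R)$ be the natural map into the direct limit. The CM hypothesis makes every such sop a regular sequence, so all transition maps in the direct system are injective and hence $\phi$ is injective. By residuality $0^{\cl}_{R/I}=I^{\cl}/I$, and by functoriality $\phi(I^{\cl}/I)\subseteq 0^{\cl}_{\mH_m^d(R)}$. Assuming $\cl$-rationality, Lemma \ref{lem:0closureis0} gives $0^{\cl}_{\mH_m^d(R)}=0$, so injectivity of $\phi$ forces $I^{\cl}=I$.

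For the converse, assume every parameter ideal is $\cl$-closed and that Axiom \ref{axiom:finite} holds. By Lemma \ref{lem:0closureis0} it suffices to prove $0^{\cl_{fg}}_{\mH_m^d(R)}=0$. Let $\eta\in 0^{\cl_{fg}}_{\mH_m^d(R)}$, so by definition there is a finitely generated $N\subseteq \mH_m^d(R)$ with $\eta\in 0^{\cl}_N$. Writing each generator of $N$ as a class in some $R/(x_1^t,\ldots,x_d^t)$ and taking $t$ large enough (and using that powers of a sop are still a sop), we may enlarge $N$ to be the image $N_t$ of a single $R/(x_1^t,\ldots,x_d^t)$; monotonicity of $\cl$ ensures $\eta\in 0^{\cl}_{N_t}$. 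The CM hypothesis makes $R/(x_1^t,\ldots,x_d^t)\to N_t$ an isomorphism, so by residuality the relation $\eta\in 0^{\cl}_{N_t}$ translates into $r\in(x_1^t,\ldots,x_d^t)^{\cl}$ for any representative $r\in R$ of $\eta$. The hypothesis gives $r\in(x_1^t,\ldots,x_d^t)$, so $\eta=0$ in $N_t$, and hence in $\mH_m^d(R)$.

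The main technical obstacle is the ``enlarge $N$ to a quotient of $R$'' step in the converse: one needs that every finitely generated submodule of $\mH_m^d(R)$ is contained in the image of some $R/(x_1^t,\ldots,x_d^t)$, which is where the direct-limit presentation and the CM hypothesis (so that transition maps are injective) play a central role; everything else is a clean application of residuality, functoriality, and Lemma \ref{lem:0closureis0}.
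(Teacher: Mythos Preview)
Your proof is correct and follows essentially the same route as the paper's: both directions use Lemma \ref{lem:0closureis0}, the direct-limit presentation $\mH_m^d(R)=\varinjlim_t R/(x_1^t,\ldots,x_d^t)$, injectivity of the transition maps coming from the \CM\ hypothesis, and residuality plus functoriality to pass between $0^{\cl}_{R/I_t}$ and $I_t^{\cl}/I_t$. One small terminological note: the step ``monotonicity of $\cl$ ensures $\eta\in 0^{\cl}_{N_t}$'' is really functoriality applied to the inclusion $N\hookrightarrow N_t$ (monotonicity in the closure axioms refers to enlarging the submodule within a fixed ambient module, not enlarging the ambient module), but you have functoriality as a hypothesis, so the argument stands.
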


\begin{proof}
By Lemma \ref{lem:0closureis0}, $R$ is $\cl$-rational if and only if $0_{\mH_m^d(R)}^{\cl}=0$. 

First we prove the converse. Since $R$ is \CM, for any system of parameters $(x_1,\ldots,x_d)$, 
\[\mH_m^d(R)=\varinjlim_{t} R/(x_1^t,\ldots,x_d^t)R.\]
By our hypotheses, 
\[0_{\mH_d^m(R)}^{\cl}=0_{\mH_d^m(R)}^{\cl_{fg}}=\sum_{G} 0_G^{\cl}, \]
where $G$ ranges over the \fg\ submodules of $\mH_d^m(R)$. These include the $R/(x_1^t,\ldots,x_d^t)$, so \[\sum_t 0_{R/(x_1^t,\ldots,x_d^t)}^{\cl} \subseteq 0_{\mH_d^m(R)}^{\cl_{fg}}.\]
Since each $G$ must be contained in $R/(x_1^t,\ldots,x_d^t)$ for some $t$ and $\cl$ is functorial,
\[0_G^{\cl} \subseteq 0_{R/(x_1^t,\ldots,x_d^t)}^{\cl}\]
for some $t$, so 
\[0_{\mH_d^m(R)}^{\cl_{fg}} \subseteq \sum_t 0_{R/(x_1^t,\ldots,x_d^t)}^{\cl}.\]
Hence 
\[0_{\mH_d^m(R)}^{\cl}=\sum_t 0_{R/(x_1^t,\ldots,x_d^t)}^{\cl}.\]

If every ideal generated by a system of parameters is $\cl$-closed, then since $\cl$ is residual, $0$ is $\cl$-closed in every $R/(x_1^t,\ldots,x_d^t)$. Hence 
$0_{\mH_m^d(R)}=0$.

Conversely, assume $0_{\mH_m^d(R)}=0$, and let $x_1,\ldots,x_d$ be a \sop\ for $R$. Suppose there is some $t$ such that $0_{R/(x_1^t,\ldots,x_d^t)}^{\cl} \ne 0$. Say $u \in 0_{R/(x_1^t,\ldots,x_d^t)}^{\cl}$ is nonzero. Since $\cl$ is functorial, the image of $u$ in $\mH_m^d(R)$ must be in $0_{\mH_m^d(R)}^{\cl}=0.$
Thus there is some $s \geqslant 0$ such that $(x_1 \cdots x_d)^su \in (x_1^{t+s},\ldots,x_d^{t+s})$. Since $R$ is \CM, $x_1,\ldots,x_d$ is a regular sequence, and so $u \in (x_1^t,\ldots,x_d^t)$, giving a contradiction. Hence $0$ is $\cl$-closed in $R/(x_1^t,\ldots,x_d^t)$ for every system of parameters $x_1,\ldots,x_d$. Since $\cl$ is residual, this implies every ideal generated by a system of parameters is $\cl$-closed.
\end{proof}

\begin{rmk}
Note that while tight closure is not known to be finitistic in general, Smith proved that $0_{\mH_m^d(R)}^*=0_{\mH_m^d(R)}^{*_{fg}}$ as long as $R$ is a reduced, equidimensional excellent local ring \cite[text after 3.3]{smith1994tight}, so Theorem \ref{thm:clrat} holds for tight closure. Our proof above is modelled on the tight closure proof in \cite{SmithThesis}.
\end{rmk}

\begin{ex}
We give an example to illustrate the need for Axiom \ref{axiom:finite} in the statement of Theorem \ref{thm:clrat}. Consider epf closure. Let $R$ be a complete regular local ring of mixed characteristic. By Theorem \ref{thm:epf0}, $0_E^{\epf}=E$. Since $R$ is regular, $E=\mH_m^d(R)$. However, by \cite[Theorem 3.19]{Heitmann2018}, every ideal of $R$ is epf-closed in $R$. Since epf-closure is residual \cite[Lemma 3.1 and Proposition 7.2]{R.G.2016},  $0_{R/(x_1^t,\ldots,x_n^t)R}^{\epf}=0$ for all $t \geqslant 0$.
\end{ex}

The next few results give weaker conditions that are equivalent to all parameter ideals being $\cl$-closed, paralleling those for tight closure.

\begin{cor}
\label{cor:rattosop}
If every ideal generated by a \sop\ is $\cl$-closed and $I$ is generated by part of a system of parameters, then $I^{\cl}_R=I$.

In particular, this holds if $R$ is $\cl$-rational.
\end{cor}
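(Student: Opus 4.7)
The plan is to reduce the case of $I$ generated by part of a system of parameters to the hypothesis on full systems of parameters by thickening the missing generators. So extend $I=(x_1,\ldots,x_k)$ to a full system of parameters $x_1,\ldots,x_d$ and, for each $t\geqslant 1$, consider
\[J_t=(x_1,\ldots,x_k,x_{k+1}^t,\ldots,x_d^t).\]
Since $x_1,\ldots,x_k,x_{k+1}^t,\ldots,x_d^t$ is still a system of parameters, the hypothesis gives $(J_t)^{\cl}_R=J_t$. Monotonicity (property (3) of a closure operation) applied to $I\subseteq J_t$ then yields $I^{\cl}_R\subseteq (J_t)^{\cl}_R=J_t$, so that any $u\in I^{\cl}_R$ lies in $\bigcap_{t\geqslant 1} J_t$.

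The remaining step is to identify this intersection with $I$. Because $R$ is \CM, the quotient $R/I$ has dimension $d-k$ and $x_{k+1},\ldots,x_d$ is a system of parameters for $R/I$, so $(x_{k+1},\ldots,x_d)R/I$ is $\mm$-primary in $R/I$. The Krull intersection theorem then gives $\bigcap_t (x_{k+1},\ldots,x_d)^tR/I = 0$, and since $(x_{k+1}^t,\ldots,x_d^t)\subseteq (x_{k+1},\ldots,x_d)^t$, we conclude
\[\bigcap_{t\geqslant 1} J_t \;\subseteq\; \bigcap_{t\geqslant 1}\bigl(I+(x_{k+1},\ldots,x_d)^t\bigr) = I.\]
Thus $u\in I$, proving $I^{\cl}_R=I$. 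The ``in particular'' clause then follows immediately by combining with Theorem \ref{thm:clrat}, which shows that $\cl$-rationality implies every parameter ideal is $\cl$-closed (the forward direction of that theorem requires neither Axiom \ref{axiom:finite} nor functoriality).

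The proof is essentially routine; the only conceptual point is the passage from part of a system of parameters to a full one via powers, and recognizing that the resulting intersection collapses to $I$ thanks to the Cohen--Macaulay hypothesis (used here only to guarantee that $(x_{k+1},\ldots,x_d)$ generates an $\mm$-primary ideal in $R/I$). No finer properties of $\cl$ beyond monotonicity are needed for this corollary, which is why it can be stated without any of the axioms (functoriality, residuality, Axiom \ref{axiom:finite}) invoked in Theorem \ref{thm:clrat}.
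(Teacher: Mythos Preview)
Your proof is correct and follows exactly the paper's approach: extend to a full system of parameters, thicken the extra generators, use monotonicity to land in $\bigcap_t J_t=I$, and invoke Theorem~\ref{thm:clrat} for the final clause. Two small inaccuracies in your commentary are worth flagging: first, the fact that $(x_{k+1},\ldots,x_d)R/I$ is $\mm$-primary does not require the \CM\ hypothesis---it follows directly from $(x_1,\ldots,x_d)$ being $\mm$-primary; second, the forward direction of Theorem~\ref{thm:clrat} \emph{does} use functoriality (to push $0^{\cl}_{R/(x_1^t,\ldots,x_d^t)}$ into $0^{\cl}_{\mH_m^d(R)}$) as well as residuality, so your parenthetical remark that neither is needed is not correct.
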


\begin{proof}
We use the method of \cite[Page 126]{fndtc}: let $I=(x_1,\ldots,x_k)$, where $x_1,\ldots,x_d$ is a system of parameters for $R$. Then for every $t$, $J_t=(x_1,\ldots,x_k,x_{k+1}^t,\ldots,x_d^t)$ is $\cl$-closed and $I^{\cl} \subseteq J_t^{\cl}=J_t$. Hence $I^{\cl} \subseteq \bigcap_t J_t=I$.

The final claim follows from Theorem \ref{thm:clrat}.
\end{proof}

\begin{lem}
\label{lem:ccimpliescm}
Let $R$ be a local ring and $\cl$ a closure operation on $R$-modules satisfying colon-capturing such that every ideal $I$ generated by part of a system of parameters is $\cl$-closed. Then $R$ is \CM.
\end{lem}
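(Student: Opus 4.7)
The plan is to show directly that every system of parameters of $R$ is a regular sequence, which is equivalent to $R$ being \CM\ (assuming $R$ is local, so that \sop s exist). Fix a \sop\ $x_1,\ldots,x_d$ for $R$. I need to verify that for each $0 \leqslant k < d$, the element $x_{k+1}$ is a nonzerodivisor on $R/(x_1,\ldots,x_k)$; equivalently, that
\[
(x_1,\ldots,x_k) :_R x_{k+1} \subseteq (x_1,\ldots,x_k).
\]

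This is a one-line deduction from the two hypotheses: by colon-capturing applied to the partial \sop\ $x_1,\ldots,x_{k+1}$,
\[
(x_1,\ldots,x_k) :_R x_{k+1} \;\subseteq\; (x_1,\ldots,x_k)^{\cl},
\]
and by the assumption that every ideal generated by part of a \sop\ is $\cl$-closed, $(x_1,\ldots,x_k)^{\cl} = (x_1,\ldots,x_k)$. Chaining these gives the desired containment, so $x_{k+1}$ is a nonzerodivisor modulo $(x_1,\ldots,x_k)$.

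Applying this for each $k = 0,\ldots,d-1$ shows that $x_1,\ldots,x_d$ is a regular sequence, so $R$ is \CM. There is no real obstacle here; the only thing to note is that one must take $k=0$ as well (so $x_1$ is a nonzerodivisor on $R$), which is handled by interpreting the empty ideal $(\,)$ as the zero ideal and noting that $0$ is also generated by an empty subset of a \sop, hence $\cl$-closed by hypothesis; alternatively one observes directly that colon-capturing as stated in Definition \ref{def:strongccversionA} applies to partial sops starting at length $1$, giving $0 :_R x_1 \subseteq 0^{\cl} = 0$.
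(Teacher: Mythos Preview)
Your proof is correct and follows essentially the same argument as the paper: fix a \sop, apply colon-capturing and the hypothesis that partial \sop\ ideals are $\cl$-closed to get $(x_1,\ldots,x_{i-1}):x_i \subseteq (x_1,\ldots,x_{i-1})$ for each $i$, and conclude the sequence is regular. Your extra remark on the $k=0$ case is a harmless elaboration the paper leaves implicit.
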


\begin{proof}
Let $I=(x_1,\ldots,x_d)$ where $x_1,\ldots,x_d$ form a system of parameters on $R$. Then for each $1 \leqslant i \leqslant d$, 
\[(x_1,\ldots,x_{i-1}):x_i \subseteq (x_1,\ldots,x_{i-1})^{\cl}=(x_1,\ldots,x_{i-1}).\]
Hence, $x_1,\ldots,x_d$ is a regular sequence, so $R$ is \CM.
\end{proof}

To prove that a ring is $F$-rational, it suffices to prove that one system of parameters is tightly closed under some conditions (reduced, local, has a test element) \cite[Page 128]{fndtc}. In the next couple of results, we prove comparable results for other closure operations.

\begin{prop}\label{prop-cl-closed-for-all}
Let $R$ be a complete local \CM\ ring. Suppose $I=(x_1,\ldots,x_d)$ is generated by a system of parameters. If $\cl$ is a functorial, residual closure operation and $I_t=(x_1^t,\ldots,x_d^t)$ is $\cl$-closed for all $t>0$, then every ideal generated by a \sop\ is $\cl$-closed.

If in addition $\cl$ satisfies \cref{axiom:finite}, then $R$ is $\cl$-rational.
\end{prop}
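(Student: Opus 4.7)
The plan is to prove Part 1 by a linkage argument and then derive Part 2 from Theorem~\ref{thm:clrat}.

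For Part 1, fix any sop $y_1, \ldots, y_d$, set $J = (y_1, \ldots, y_d)$, and take $u \in J^{\cl}$; the goal is to show $u \in J$. Since $J$ is $\mm$-primary, for some sufficiently large $T$ we have $I_T \subseteq J$. The key algebraic tool is the linkage identity $I_T : (I_T : J) = J$, which holds for parameter ideals $I_T \subseteq J$ in the CM local ring $R$: both $I_T$ and $J$ are complete intersections of height $d$, and $J$ is unmixed because it is a parameter ideal in a CM ring. Suppose for contradiction that $u \notin J$. Then $u \notin I_T : (I_T : J)$, so there exists $c \in I_T : J$ with $cu \notin I_T$. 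However, multiplication by $c$ defines an $R$-module map $R \to R$ sending $J$ into $I_T$, and by functoriality of $\cl$ together with monotonicity, $c \cdot J^{\cl} \subseteq (cJ)^{\cl} \subseteq I_T^{\cl} = I_T$, where the last equality is the hypothesis. Thus $cu \in I_T$, a contradiction, so $u \in J$ and $J^{\cl} = J$.

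For Part 2, Part 1 supplies the hypothesis ``every ideal generated by a sop is $\cl$-closed'' required by the converse direction of Theorem~\ref{thm:clrat}. Under the additional assumption of Axiom~\ref{axiom:finite}, that theorem then yields that $R$ is $\cl$-rational.

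The main obstacle I expect is justifying the linkage identity $I_T : (I_T : J) = J$ in the desired generality. While this is classical for parameter ideals in CM local rings — following from the unmixedness of $J$ and standard linkage theory for complete intersections — writing the proof self-contained may require unpacking it via explicit colon-capturing computations on regular sequences. If linkage proves cumbersome, one could first handle the special case of monomial ideals $(x_1^{s_1}, \ldots, x_d^{s_d})$ directly via the multiplier $c = \prod_j x_j^{T - s_j}$ with $T = \max_i s_i$ and a hands-on CM-division argument, but extending this to an arbitrary sop appears to route through linkage all the same.
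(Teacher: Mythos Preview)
Your argument is correct, and it is closely related to the paper's proof, though packaged differently. The paper argues as follows: for $T\gg 0$ with $I_T\subseteq J$, it cites the classical fact (from \cite[p.~122]{fndtc}) that there is an \emph{injective} $R$-module map $f:R/J\hookrightarrow R/I_T$; functoriality gives $f(0_{R/J}^{\cl})\subseteq 0_{R/I_T}^{\cl}$, residuality gives $0_{R/I_T}^{\cl}=0$ from $I_T^{\cl}=I_T$, and injectivity of $f$ then forces $0_{R/J}^{\cl}=0$, whence $J^{\cl}=J$ by residuality again.

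Your version works at the level of $R$ rather than the quotients and trades the two uses of residuality for the linkage identity. These are equivalent reformulations of the same commutative-algebra input: the injective map the paper invokes is multiplication by the transition determinant $\Delta=\det(c_{ij})$ where $x_i^T=\sum_j c_{ij}y_j$, and its injectivity says exactly $I_T:\Delta=J$; since $\Delta\in I_T:J$ by the adjugate formula, this already yields $I_T:(I_T:J)=J$. So your worry about justifying linkage in the non-Gorenstein \CM\ setting is resolved by precisely the fact the paper cites. A small bonus of your organization is that Part~1 only needs functoriality and monotonicity on $R$, not residuality; conversely, the paper's formulation makes the role of the injection $R/J\hookrightarrow R/I_T$ explicit and avoids stating linkage separately. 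Part~2 is handled identically in both.
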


\begin{proof}
Let $y_1,\ldots,y_d$ be a system of parameters on $R$. Then for $t \gg 0$, $(x_1^t,\ldots,x_d^t) \subseteq (y_1,\ldots,y_d)$. By \cite[Page 122]{fndtc}, there is an injective $R$-module map
\[f:A_1=R/(y_1,\ldots,y_d)R \hookrightarrow R/(x_1^t,\ldots,x_d^t)R=A_2.\]
Since $\cl$ is functorial, $f\left(0_{A_1}^{\cl}\right) \subseteq 0_{A_2}^{\cl}=0$. Since $f$ is injective, $0_{A_1}^{\cl}=0$. Since $\cl$ is residual, $(y_1,\ldots,y_d)_R^{\cl}=(y_1,\ldots,y_d)$.

The final statement follows from Theorem \ref{thm:clrat}.
\end{proof}

\begin{thm}
\label{thm:onesopenough}
Let $R$ be a complete local \CM\ ring. Suppose that $\cl$ is a functorial and residual closure operation satisfying colon-capturing and strong colon capturing Version A (Definition \ref{def:strongccversionA}). Then if $I=(\seq{x}{d})$ generated by one \sop\ is $\cl$-closed, then every ideal generated by a \sop\ is cl-closed.

If $\cl$ also satisfies Axiom \ref{axiom:finite}, $R$ is $\cl$-rational.
\end{thm}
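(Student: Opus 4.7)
The plan is to reduce to Proposition \ref{prop-cl-closed-for-all} by proving that $(x_1^t,\ldots,x_d^t)$ is $\cl$-closed for every $t\geq 1$; the final ``$\cl$-rational'' conclusion then follows from the converse direction of Theorem \ref{thm:clrat} under \cref{axiom:finite}. Concretely, I would establish the stronger mixed-power statement that $J_{t_1,\ldots,t_d}:=(x_1^{t_1},\ldots,x_d^{t_d})$ is $\cl$-closed for all $t_i\geq 1$, by induction on $T:=\sum_{i=1}^d(t_i-1)$. The base case $T=0$ is the hypothesis $I^{\cl}=I$.

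For the inductive step, assume $T\geq 1$ and WLOG $t_1\geq 2$. Given $u\in J_{t_1,\ldots,t_d}^{\cl}$, observe that $J_{t_1,\ldots,t_d}\subseteq J_{t_1-1,t_2,\ldots,t_d}$, and the latter ideal has $T$-value $T-1$, so it is $\cl$-closed by the inductive hypothesis. Hence $u\in J_{t_1-1,t_2,\ldots,t_d}$, which lets us write $u=rx_1^{t_1-1}+v$ with $v\in(x_2^{t_2},\ldots,x_d^{t_d})$. Then $rx_1^{t_1-1}=u-v\in J_{t_1,\ldots,t_d}^{\cl}$, since the right-hand side is the difference of a closure element and an ideal element. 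The goal is now to promote this to $rx_1^{t_1-1}\in J_{t_1,\ldots,t_d}$, equivalently $r\in(x_1,x_2^{t_2},\ldots,x_d^{t_d})$ by the regular-sequence colon formula for Cohen-Macaulay rings. Once we have this, $u=rx_1^{t_1-1}+v\in J_{t_1,\ldots,t_d}$, closing the induction.

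The key step, and the place where I expect the difficulty to lie, is extracting $r\in(x_1,x_2^{t_2},\ldots,x_d^{t_d})$ from the closure-containment $rx_1^{t_1-1}\in J_{t_1,\ldots,t_d}^{\cl}$. The plan is to reduce modulo $(x_2^{t_2},\ldots,x_d^{t_d})$: set $\bar R:=R/(x_2^{t_2},\ldots,x_d^{t_d})$, which is one-dimensional and Cohen-Macaulay with parameter $\bar x_1$. By residuality, $\bar r\bar x_1^{t_1-1}\in(\bar x_1^{t_1})^{\cl}_{\bar R}$, and the inductive hypothesis applied to $J_{1,t_2,\ldots,t_d}$ (which has $T$-value $T-(t_1-1)<T$) combined with residuality gives $(\bar x_1)^{\cl}_{\bar R}=(\bar x_1)$. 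Then I would invoke strong colon-capturing Version A for the partial system of parameters $x_1,x_2^{t_2},\ldots,x_d^{t_d}$, which with $a=t_1-1$ reads $(x_1^{t_1},x_2^{t_2},\ldots,x_d^{t_d}):x_1^{t_1-1}\subseteq(x_1,x_2^{t_2},\ldots,x_d^{t_d})^{\cl}$; together with colon-capturing and the functoriality of $\cl$ applied to the natural injection $R/(x_1,x_2^{t_2},\ldots,x_d^{t_d})\hookrightarrow R/J_{t_1,\ldots,t_d}$ sending $\bar 1\mapsto\overline{x_1^{t_1-1}}$, this yields $r\in(x_1,x_2^{t_2},\ldots,x_d^{t_d})^{\cl}$, which equals $(x_1,x_2^{t_2},\ldots,x_d^{t_d})$ by the inductive hypothesis.

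The main obstacle is the passage from closure-containment to ideal-containment in the last paragraph: strong colon-capturing Version A as stated colons an \emph{ideal}, not its closure, so the argument has to interpret the closure-containment via residuality on a suitable quotient and then pull back along an injection. Once the mixed-power claim is established, Proposition \ref{prop-cl-closed-for-all} gives that every ideal generated by a system of parameters is $\cl$-closed, and under \cref{axiom:finite}, Theorem \ref{thm:clrat} promotes this to $\cl$-rationality of $R$.
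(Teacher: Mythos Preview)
Your plan to reduce to Proposition \ref{prop-cl-closed-for-all} by showing all $I_t=(x_1^t,\ldots,x_d^t)$ are $\cl$-closed, and then to invoke Theorem \ref{thm:clrat} under Axiom \ref{axiom:finite}, matches the paper's outline. The paper's argument for the first step is shorter than your mixed-power induction: citing \cite[Page 126]{fndtc}, it observes that if $I_t^{\cl}\neq I_t$ for some $t$, then some socle element of $R/I_t$ lies in $I_t^{\cl}\setminus I_t$, and since $R$ is \CM\ every such element has the form $(x_1\cdots x_d)^{t-1}z$ with $z$ in the socle of $R/I$; a single application of strong colon-capturing version A then forces $z\in I^{\cl}=I$, a contradiction.

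The genuine gap in your proposal is the one you yourself flag, and your suggested workaround does not close it. You need to pass from $rx_1^{t_1-1}\in J_{t_1,\ldots,t_d}^{\cl}$ to $r\in(x_1,x_2^{t_2},\ldots,x_d^{t_d})^{\cl}$, but Definition \ref{def:strongccversionA} only gives $J_{t_1,\ldots,t_d}:x_1^{t_1-1}\subseteq(x_1,x_2^{t_2},\ldots,x_d^{t_d})^{\cl}$ with the \emph{ideal}, not its closure, on the left (and since $R$ is \CM, that containment is already an equality without any closure, so the literal hypothesis is vacuous here). Your fix via the injection $\phi:R/K\hookrightarrow R/J$ sending $\bar 1\mapsto\overline{x_1^{t_1-1}}$ fails because functoriality only says $\phi\bigl(0_{R/K}^{\cl}\bigr)\subseteq 0_{R/J}^{\cl}$; it does not give the reverse implication $\phi(\bar r)\in 0_{R/J}^{\cl}\Rightarrow\bar r\in 0_{R/K}^{\cl}$ that you need. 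Note that the paper's proof simply asserts the closure-on-the-left version $(x_1^t,\ldots,x_d^t)^{\cl}:(x_1\cdots x_d)^{t-1}\subseteq(x_1,\ldots,x_d)^{\cl}$ as a consequence of strong colon-capturing version A; so the intended hypothesis is evidently this closure form (which does hold for tight closure and for the module closures $\cl_B$). Under that stronger reading your induction also goes through, but not via the functoriality-pullback argument you propose.
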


\begin{proof}
Write $I_t=(\seq{x^t}{d})R$. We aim to show that $I_t$ is $\cl$-closed for all $t\geqslant 1$. If not, by \cite[Page 126]{fndtc}, we can assume that for some $t$, there exists $u=x_1^{t-1}\dots x_d^{t-1}z \in I_t^{\cl}-I_t$ where $z$ represents an element of the socle in $R/I$.

Since $\cl$ satisfies strong colon-capturing, version A, $(\seq{x^t}{d})^{\cl}:(x_1\dots x_d)^{t-1}\subseteq (\seq{x}{d})^{\cl}$. Hence $z$ is in $(\seq{x}{d})^{\cl}=(\seq{x}{d})$, a contradiction.

Now by Proposition \ref{prop-cl-closed-for-all}, every ideal generated by a \sop\ is $\cl$-closed.

The final statement follows by \cref{thm:clrat}.
\end{proof}

\begin{thm}
\label{thm:dontneedcm}
    Let $R$ be a local ring, $x_1,\ldots,x_d$ a \sop\ on $R$, and $\cl$ a closure operation on at least ideals of $R$ satisfying strong colon-capturing, version B. Let $I_k$ denote the ideal $(x_1,\ldots,x_k)$. If $I_d^{\cl}=I_d$, then $I_k^{\cl}=I_k$ for each $0 \le k \le d$ and $x_1,\ldots,x_d$ is a regular sequence on $R$.

    Consequently, $R$ is \CM, so if $R$ is complete and $\cl$ is functorial and residual, then every ideal generated by a \sop\ is $\cl$-closed. If in addition $\cl$ satisfies Axiom \ref{axiom:finite}, then $R$ is $\cl$-rational.
\end{thm}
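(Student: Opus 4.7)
The plan is a two-stage attack: first establish the core claim (closedness of every partial sop ideal and the regular-sequence conclusion), and then derive the ``consequently'' corollaries via Proposition \ref{prop-cl-closed-for-all} and Theorem \ref{thm:clrat}.

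For the core, I would argue by downward induction on $k$ that $I_k^{\cl} = I_k$, the base case $k = d$ being the hypothesis. For the inductive step, given $u \in I_k^{\cl}$, the inclusion $I_k \subseteq I_{k+1}$ forces $u \in I_{k+1}^{\cl} = I_{k+1}$, so $u = a + x_{k+1} b$ with $a \in I_k$. Then $x_{k+1} b = u - a \in I_k^{\cl}$, and strong colon-capturing version B applied to the partial sop $x_1, \dots, x_{k+1}$ gives $b \in I_k^{\cl}$. Hence $I_k^{\cl} \subseteq I_k + x_{k+1} I_k^{\cl}$, so the finitely generated module $M := I_k^{\cl}/I_k$ satisfies $M = x_{k+1} M$, and Nakayama's lemma (since $x_{k+1} \in \mm$) forces $M = 0$. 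Once $I_k^{\cl} = I_k$ is established for all $k$, the regular-sequence conclusion is immediate: version B implies colon-capturing (Remark \ref{rmk:ccimplications}), so $I_{k-1} :_R x_k \subseteq I_{k-1}^{\cl} = I_{k-1}$ shows each $x_k$ is a non-zero-divisor modulo $I_{k-1}$, and $R$ is thus \CM.

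For the ``consequently'' part, assuming $R$ complete and $\cl$ functorial and residual, the intended route is Proposition \ref{prop-cl-closed-for-all}, which reduces the conclusion that every sop ideal is $\cl$-closed to showing $(x_1^t, \dots, x_d^t)^{\cl} = (x_1^t, \dots, x_d^t)$ for every $t > 0$. I would try to bootstrap this from the case $t = 1$ by applying the Step 1 Nakayama argument now to the sop of $t$-th powers, combined with functoriality of $\cl$ along the natural surjection $R/(x_1^t,\dots,x_d^t) \twoheadrightarrow R/(x_1,\dots,x_d)$, which constrains $(x_1^t,\dots,x_d^t)^{\cl}$ to lie inside $(x_1,\dots,x_d)$. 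The final $\cl$-rationality conclusion under Axiom \ref{axiom:finite} will then follow from the converse direction of Theorem \ref{thm:clrat}. I expect the main obstacle to be precisely this bootstrap step: applying Step 1 directly to the sop $x_1^t, \dots, x_d^t$ still requires $(x_1^t,\dots,x_d^t)^{\cl} = (x_1^t,\dots,x_d^t)$ as input, so closing the loop may require a finer interplay between version B, residuality, and Cohen-Macaulayness — perhaps via Matlis duality or a version A-style colon-capturing identity recovered once $R$ is known to be \CM.
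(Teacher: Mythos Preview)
Your downward induction via version B and Nakayama for the first paragraph is correct and is exactly the argument from \cite[p.~128]{fndtc} that the paper cites; the regular-sequence conclusion via colon-capturing (implied by version B, Remark~\ref{rmk:ccimplications}) is likewise the intended one.

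For the second paragraph the paper, like you, appeals to Proposition~\ref{prop-cl-closed-for-all}. The bridge you worry about --- getting $(x_1^t,\dots,x_d^t)^{\cl}=(x_1^t,\dots,x_d^t)$ for all $t$ from the single case $t=1$ --- is genuinely needed and the paper does not spell it out, but your closing guess is the right resolution: once $R$ is \CM\ (which the first paragraph has just established), every partial \sop\ is a regular sequence, so the colon $(x_1^t,x_2,\dots,x_k):_R x_1^a$ already \emph{equals} $(x_1^{t-a},x_2,\dots,x_k)$, and strong colon-capturing version A therefore holds automatically for any closure. Together with ordinary colon-capturing (again from version B), all hypotheses of Theorem~\ref{thm:onesopenough} are now in force, and that result supplies $I_t^{\cl}=I_t$ for every $t$ (indeed, every \sop\ ideal $\cl$-closed). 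No Matlis-duality or functoriality bootstrap is needed here; the obstacle you anticipated dissolves as soon as one notices that Cohen--Macaulayness makes version A free.
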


\begin{proof}
    The first paragraph follows as in the proof of the first Theorem on page 128 of \cite{fndtc}, substituting strong colon-capturing, version B for the use of the Theorem cited in the proof.

    The second paragraph follows from Proposition \ref{prop-cl-closed-for-all}.
\end{proof}

\begin{cor}\label{cor:module-closure}
Let $B$ be a big \CM\ $R$-module. Then $R$ is $\cl_B$-rational if and only if one ideal generated by a system of parameters is $\cl_B$-closed.
\end{cor}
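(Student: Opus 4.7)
The plan is to reduce Corollary \ref{cor:module-closure} to Theorem \ref{thm:onesopenough} by verifying that the module closure $\cl_B$ satisfies every hypothesis appearing there: functoriality and residuality, strong colon-capturing (version A, and hence also colon-capturing), and the injective finiteness condition of Axiom \ref{axiom:finite}. Once this is done, Theorem \ref{thm:onesopenough} yields the implication ``one parameter ideal $\cl_B$-closed $\Rightarrow$ $R$ is $\cl_B$-rational,'' while the reverse direction is an immediate consequence of Theorem \ref{thm:clrat} together with Corollary \ref{cor:rattosop}.

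The first several verifications will be short. Functoriality of $\cl_B$ is a direct unpacking of tensor products: if $x \in L_M^{\cl_B}$ and $f \colon M \to M'$, then writing $b \otimes x = \sum b_i \otimes \ell_i$ yields $b \otimes f(x) = \sum b_i \otimes f(\ell_i)$ in the image of $B \otimes f(L) \to B \otimes M'$. Residuality follows from right-exactness of $B \otimes_R -$ applied to $0 \to L \to M \to M/L \to 0$. Strong colon-capturing (both versions A and B) transports the classical colon formula for regular sequences to $\cl_B$: for version B, if $r x_{k+1} \in (x_1,\ldots,x_k)^{\cl_B}$, then $b r x_{k+1} \in (x_1,\ldots,x_k)B$ for every $b$, and since the partial \sop\ $x_1,\ldots,x_{k+1}$ is a regular sequence on the big \CM\ module $B$, this forces $br \in (x_1,\ldots,x_k)B$, i.e.\ $r \in (x_1,\ldots,x_k)^{\cl_B}$; version A is analogous. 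One could alternatively invoke Theorem \ref{thm:dontneedcm} via version B, which even removes the a priori Cohen-Macaulay hypothesis on $R$.

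The main obstacle will be Axiom \ref{axiom:finite}. Here I will use the big \CM\ hypothesis on $B$ again, but crucially for the injectivity of transition maps in
\[
B \otimes_R \mH_m^d(R) \;=\; \varinjlim_t \, B/(x_1^t,\ldots,x_d^t)B,
\]
where the structural map from stage $t$ to stage $t+1$ is multiplication by $x_1 \cdots x_d$, a nonzerodivisor on $B/(x_1^t,\ldots,x_d^t)B$ by the regular-sequence property. Given $\eta \in 0_{\mH_m^d(R)}^{\cl_B}$ with representative $\eta_{t_0} \in R/(x_1^{t_0},\ldots,x_d^{t_0})$, the condition $b \otimes \eta = 0$ in the limit for every $b \in B$ then descends to $b \otimes \eta_{t_0} = 0$ already in $B/(x_1^{t_0},\ldots,x_d^{t_0})B$, so $\eta_{t_0} \in 0_{R/(x_1^{t_0},\ldots,x_d^{t_0})}^{\cl_B}$, exhibiting $\eta$ in the finitistic closure. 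This is precisely where the big \CM\ structure of $B$ plays its role: without injectivity of the transition maps we would be stuck with level bounds $t_b$ depending on $b$, and no uniform finite stage would capture $\eta$.

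With all hypotheses verified, Theorem \ref{thm:onesopenough} completes the backward direction, and the forward direction is \cref{cor:rattosop}. The overall shape of the argument is therefore: check routine structural properties of $\cl_B$, translate regularity of a \sop\ on $B$ into (strong) colon-capturing, and then do the one subtle calculation---establishing Axiom \ref{axiom:finite}---by exploiting the injective direct-limit structure of $B \otimes \mH_m^d(R)$.
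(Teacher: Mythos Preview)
Your proposal is correct and follows essentially the same route as the paper: verify that $\cl_B$ is functorial, residual, satisfies strong colon-capturing, and satisfies Axiom \ref{axiom:finite}, then invoke the general machinery (the paper applies Theorem \ref{thm:dontneedcm} via version B, while you primarily use Theorem \ref{thm:onesopenough} via version A but explicitly note the \ref{thm:dontneedcm} alternative), with the forward direction handled by Corollary \ref{cor:rattosop} in both cases. The only difference is that the paper simply cites \cite{R.G.2016} for all of these properties of $\cl_B$, whereas you sketch their proofs inline---in particular your direct-limit argument for Axiom \ref{axiom:finite} is exactly the content of the cited result---so your write-up is more self-contained but identical in strategy.
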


\begin{proof}
By \cite{R.G.2016}, $\cl_B$ is a functorial, residual closure operation satisfying \cref{axiom:finite} and strong colon-capturing, versions A and B. By Theorem \ref{thm:dontneedcm}, if the ideal generated by one system of parameters is $\cl_B$-closed, then $R$ is $\cl_B$-rational.

The other direction follows from Corollary \ref{cor:rattosop}.
\end{proof}

\begin{cor}
\label{cor:weaklyimpliesrational}
Let $R$ be a local ring and $\cl$ a functorial, residual closure operation on $R$-modules. If $R$ is weakly $\cl$-regular, then every ideal generated by a \sop\ is $\cl$-closed. Consequently, if $R$ is complete and $\cl$ also satisfies Axiom \ref{axiom:finite}, then $R$ is $\cl$-rational.
\end{cor}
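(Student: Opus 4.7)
The first statement is immediate from Definition \ref{def:weaklyclregular}: weakly $\cl$-regular says $I^{\cl}_R = I$ for every ideal $I\subseteq R$, which applies in particular to ideals generated by a (partial) system of parameters.

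For the second statement my plan is to invoke Theorem \ref{thm:clrat} directly. All of its hypotheses are in place: $R$ is complete, $\cl$ is functorial and residual by assumption, $\cl$ satisfies Axiom \ref{axiom:finite}, and every \sop\ ideal is $\cl$-closed by the first part; Theorem \ref{thm:clrat} then delivers $\cl$-rationality. A more hands-on alternative, if one wishes to avoid routing through Theorem \ref{thm:clrat}, is to use the equivalent \fg\ formulation of weakly $\cl$-regular together with Lemma \ref{lem:0closureis0}. That formulation gives $0_G^{\cl} = 0$ for every \fg\ submodule $G \subseteq \mH_m^d(R)$; functoriality applied to the inclusion $G\hookrightarrow \mH_m^d(R)$ places each such $0_G^{\cl}$ inside $0_{\mH_m^d(R)}^{\cl}$, so Axiom \ref{axiom:finite} combined with the definition of $\cl_{fg}$ yields $0_{\mH_m^d(R)}^{\cl} = 0_{\mH_m^d(R)}^{\cl_{fg}} = 0$, and Lemma \ref{lem:0closureis0} finishes the job.

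The main subtlety I anticipate is that both Theorem \ref{thm:clrat} and Lemma \ref{lem:0closureis0} require $R$ to be \CM, which is not among the explicit hypotheses of the corollary. I expect this to be supplied from the side: the remark following Definition \ref{def:weaklyclregular} observes that when $\cl$ captures colons---a natural auxiliary hypothesis satisfied by essentially every closure operation considered in this paper---weakly $\cl$-regular rings are automatically \CM. Under that implicit convention the plan above goes through unchanged; otherwise Cohen-Macaulayness should be added to the hypotheses before the $\cl$-rationality conclusion is drawn.
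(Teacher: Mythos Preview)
Your proposal is correct and matches the paper's one-line proof, which simply cites Theorem \ref{thm:clrat} and Definition \ref{def:weaklyclregular}. Your observation about the implicit Cohen--Macaulay hypothesis is well-taken and in fact goes beyond what the paper says: the paper glosses over it, but since $\cl$-rationality (Definition \ref{def:clrational}) is only defined for \CM\ complete local rings, that hypothesis is tacitly needed for the second conclusion even to be stated.
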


\begin{proof}
This follows from Theorem \ref{thm:clrat} and the definition of weakly $\cl$-regular (Definition \ref{def:weaklyclregular}).
\end{proof}

However, when $R$ is Gorenstein, the two conditions are equivalent, just as they are for tight closure \cite[Page 129]{fndtc}. The result below is a variant of \cite[Corollary 3.6]{EpRGVNakayama}, with a different style of proof.

\begin{thm}
Let $(R,\mm,K)$ be a complete local Gorenstein ring, and $\cl$ a residual, functorial closure operation on $R$-modules satisfying Axiom \ref{axiom:finite}. Then TFAE:
\begin{enumerate}
    \item $R$ is weakly $\cl$-regular, i.e., every submodule of finitely generated modules are $\cl$-closed.
    \item $R$ is $\cl$-rational.
\end{enumerate}
\end{thm}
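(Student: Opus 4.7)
The plan is to handle the two directions separately. The implication $(1) \Rightarrow (2)$ is immediate from \Cref{cor:weaklyimpliesrational}, since all of its hypotheses (completeness, functoriality, residuality, and \Cref{axiom:finite}) are in force.

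For the main direction $(2) \Rightarrow (1)$, the idea is to exploit the Gorenstein hypothesis to collapse the ``canonical-module'' test submodule $\tau_{\cl}(\omega) \subseteq \omega$ back onto the ordinary test ideal $\tau_{\cl}(R) \subseteq R$, via the isomorphisms $\omega \cong R$ and $\mH_m^d(R) \cong E$. First, assuming $R$ is $\cl$-rational, \Cref{lem:0closureis0} gives $0_{\mH_m^d(R)}^{\cl}=0$. Functoriality of $\cl$ combined with the Gorenstein isomorphism $\mH_m^d(R) \cong E$ transports this to $0_E^{\cl}=0$. Then the lemma identifying $\tau_{\cl}(R)=\Ann_R 0_E^{\cl}$ (the result stated just after the definition of the test ideal) yields $\tau_{\cl}(R)=R$. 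Finally, unpacking $\tau_{\cl}(R)=\bigcap_{N\subseteq M} N :_R N_M^{\cl}$, the equality $\tau_{\cl}(R)=R$ forces $N_M^{\cl} \subseteq N$, hence $N_M^{\cl}=N$, for every pair $N \subseteq M$ of $R$-modules. In particular $R$ is weakly $\cl$-regular.

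There is no serious obstacle: the argument is essentially a formal walk through the duality machinery of \Cref{cor:dualclosure}--\Cref{cor:omegainterior} and the preceding test-ideal lemma. The Gorenstein hypothesis is used only, but crucially, to identify the canonical-module picture with the injective-hull picture; without it, $\cl$-rationality controls $0_{\mH_m^d(R)}^{\cl}$ but says nothing directly about $0_E^{\cl}$, and the two can genuinely differ. The only minor point to check in writing up the proof is that the Gorenstein isomorphism $\mH_m^d(R)\cong E$ intertwines $\cl$, which is automatic from functoriality applied to the isomorphism and its inverse.
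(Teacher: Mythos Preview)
Your proposal is correct, and the route you take for $(2)\Rightarrow(1)$ is genuinely different from the paper's. The paper argues directly: given finitely generated $N\subseteq M$ with $u\in N_M^{\cl}\setminus N$, it uses a maximality trick and residuality to reduce to the case $N=0$ with $M$ of finite length and simple socle $Ru$; then for $t\gg 0$ it embeds $M$ into the Artinian Gorenstein quotient $A=R/(x_1^t,\ldots,x_d^t)$ (possible since $A$ is self-injective and kills $M$), so functoriality pushes $u$ into $0_A^{\cl}$, which vanishes by the forward direction of \Cref{thm:clrat}. Your argument instead short-circuits all of this by invoking the test-ideal identity $\tau_{\cl}(R)=\Ann_R 0_E^{\cl}$: the Gorenstein isomorphism $\mH_m^d(R)\cong E$ transports $0_{\mH_m^d(R)}^{\cl}=0$ to $0_E^{\cl}=0$, whence $\tau_{\cl}(R)=R$ and every submodule of every module is $\cl$-closed. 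Your approach is slicker and in fact yields a slightly stronger conclusion (triviality of $\cl$ on all pairs, not just finitely generated ones), at the cost of importing the cited lemma of P\'erez--R.G.\ and Epstein--R.G.\ as a black box; the paper's hands-on argument is more self-contained and mirrors the classical tight closure proof in \cite{fndtc}. Neither argument needs \Cref{axiom:finite} for this direction.
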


\begin{proof}
(1) $\Rightarrow$ (2) follows from Corollary \ref{cor:weaklyimpliesrational}. We only need to show the converse.
Assume that $R$ is $\cl$-rational, and let $N\subseteq M$ be finitely generated $R$-modules. If $N$ is not $\cl$-closed, choose $u\in N^{\cl}-N$. We may replace $N$ by $N'$ where $N\subseteq N'\subseteq M$ such that $N'$ is maximal with respect to the property of not containing $u$. We still have $u \in (N')^{\cl}-N'$. The maximality of $N'$ implies that $u$ is in every nonzero submodule of $M/N'$. We replace $M,N'$ by $M/N'$ and $0$ respectively, and $u$ by its image in $M/N'$.  Since
$\cl$ is residual, $0\neq u\in 0^{\cl}_M$.

Then $M$ has finite length and $u$ spans the socle of $M$ by \cite[Lemma on page 56]{fndtc}. Let $\seq{x}{d}$ be a system of parameters for $R$, so that  $A=R/(\seq{x^t}{d})R$ is an Artinian Gorenstein local ring. Then $Ru \cong K \hookrightarrow A$, where the second map sends $y \mapsto y(x_1 \cdots x_d)^{t-1}$. Since $\cl$ is functorial, $u \in 0_A^{\cl}$. But since $\cl$ is residual and $R$ is $\cl$-rational, $0_A^{\cl}=0$, giving us a contradiction.
\end{proof}

We end by describing conditions on a closure operation $\cl$ that imply that a $\cl$-rational ring is a normal domain, something which is known to be true for tight closure in characteristic $p>0$ \cite{fndtc}.

\begin{rmk}\label{threeconditions}
    To imitate the proof that an $F$-rational ring is a normal domain from \cite{fndtc}, one needs the following hypotheses:
    \begin{enumerate}
        \item The closure operation agrees with integral closure on the $0$ ideal and ideals generated by a single nonzerodivisor.
        \item If the ring is not a domain, we need $u \in (f)^{\cl}_R$ if and only if $u \in (\bar{f})_{R/P}^{\cl}$ for every minimal prime $P$.
        \item For $R$-modules $S$ and $T$, and $s \in S$, $(Rs)^{\cl}_S \times T=(Rs \times T)^{\cl}_{S \times T}$ and the same in the other coordinate.
    \end{enumerate}
     
    The next part of our discussion will primarily focus on finding ways to ensure the first condition holds. The second condition can be circumvented by requiring the ring to be a domain (but continuing to use these conditions to prove the ring is normal), and the third is likely to be more straightforward to prove for many common closure operations.

    The part of the first condition involving $(0)$ is to get the ring to be reduced if and only if $(0)^{\cl}_R=(0)$, which can be circumvented in the case of a domain. The other part of the first condition is trickier.

    Paralleling the original result for tight closure, one axiom that would be sufficient to guarantee the rest of the first condition is Axiom 3.7 of \cite{murayama2023uniform}, which is a Brian\c{c}on-Skoda type condition. This would imply in particular for a principal ideal $I$ that $\overline{I} \subseteq I^{\cl}$. As Murayama points out in \cite[Table 1]{murayama2023uniform}, most known Dietz closures satisfy this axiom.
\end{rmk}

Another way to achieve the first condition is by a result of the first named author on plus closure. 
We begin by noting a useful fact about plus closure, including a proof for completeness:

\begin{prop}
\label{pr:plusintegralequal}
Let $R$ be a domain. Let $I$ be a principal ideal of $R$. Then $\overline{I}= I^+$. 
\end{prop}
\begin{proof}
We first show that $I^+\subseteq \overline{I}$.
Suppose that $r\in IR^+$, then $r\in IS$ for some module-finite extension $S$ of $R$. But then $r\in IS\cap R\subseteq \overline{I}$.

For the converse inclusion, note that if $r\in \overline{I}$, then $r\in \overline{IR^+}$. By the Lemma stated below, $r\in IR^+$. Since $R^+$ is normal, we conclude that $r\in I^+$.
\end{proof}

\begin{lem}[{\cite[Page 13]{615notes}}]
Let $I$ be an ideal of the ring $R$ , $r\in I$, and $h:R\to S$ a homomorphism to a normal domain $S$ such that $IS$ is principal. Then $h(r)\in IS$.
\end{lem}

\begin{axiom} [{\cite[Axiom 4.2.2]{Jiangthesis}}]
\label{gcl-axiom-Persistence}
Suppose that $\cC$ is a collections of rings and homomorphisms among them. Let $\cl$ be a closure defined on modules over each ring in $\cC$. If for any homomorphism $R\to S $ in $\cC$ and any $R$-module $M$ and a submodule $N$, we have
  \begin{equation*}
    \Ima(S\otimes_R N_M^{\cl}\rightarrow S\otimes_RM) \subseteq \left( \Ima (S\otimes_RN\rightarrow S\otimes_RM) \right)^{\cl}_{S\otimes_R M},
  \end{equation*}
  then we say that $\cl$ is a \textit{persistent closure with respect to $\cC$}.
\end{axiom}

We point out that the tight closure satisfies Axiom \ref{gcl-axiom-Persistence} and Axiom 3.7 in \cite{murayama2023uniform} under mild hypothesis on the rings 
\cite[Theorem 6.23 and 6.24]{hochsterhunekefrationality}.

\begin{notation}
Let $\Lambda$ be a Noetherian domain and let $\mathcal{C}$ be some collections of $\Lambda$-algebras containing all of the finitely generated $\Lambda$-algebras such that $\cl$ is defined on modules over every ring in $\mathcal{C}$.
\end{notation}

In the results below, we will assume that $\cl$ is persistent with respect to this collection $\mathcal{C}$ in the sense of \cref{gcl-axiom-Persistence}.

\begin{cor}
    Let $R$ be a local domain in $\mathcal{C}$. Suppose that $\cl$ is a persistent (\cref{gcl-axiom-Persistence}), colon-capturing closure operation defined on $\mathcal{C}$ mentioned above. For any $R$ in $\mathcal{C}$,
    and any principal ideal $I\subseteq R$, $\overline{I}\subseteq I^{\cl}$.
\end{cor}

\begin{proof}
         By 
\cite[Theorem 4.2.6]{Jiangthesis}, $I^+ \subseteq I^{\cl}$ for any proper ideal $I$ of $R$, and by Proposition \ref{pr:plusintegralequal}, $I^+=\overline{I}$.
\end{proof}

The corollary above
gives us the following. 

\begin{cor}
    Suppose that $\cl$ is a persistent (\cref{gcl-axiom-Persistence}), colon-capturing closure operation defined on $\mathcal{C}$ mentioned above. For any $R$ in $\mathcal{C}$ that is $\cl$-rational, $\overline{I}= I$ for any principal ideal $I\subseteq R$. Consequently, condition (1) of Remark \ref{threeconditions} holds for $\cl$.
\end{cor}

\section{Examples}
\label{sec:examples}

In this section, we give examples of rings that are and are not $\cl$-rational for certain closure operations $\cl$. In particular, we find rings of finite \CM\ type that are not $\cl_M$-rational for some of their MCM modules $M$, despite having rational singularities \cite{LeuschkeWiegand} and hence F-rational singularities for large enough characteristics $p>0$ \cite{hara}. This makes a case that MCM module closures do not quite capture the right notion of singularity to align with existing classes of singularities.

\begin{prop}
Let $R=k\ps{x^d,x^{d-1}y,\ldots,xy^{d-1},y^d}$, where $k$ is a field. Let $S=k\ps{x,y}$ so that $R$ is a (complete) Veronese subring of $S$. 
For any integer $0 \leqslant i \leqslant d-1$, let $M_i$ be the MCM $R$-module generated by the degree $i$ monomials in $S$, i.e., $M_i=x^iR+x^{i-1}yR+\ldots+y^iR$. 
Then
\begin{enumerate}
    \item $R$ is $\cl_{M_i}$-rational for $0\leqslant i\leqslant d-2$.
    \item $R$ is not $\cl_{M_{d-1}}$-rational.
\end{enumerate}
\end{prop}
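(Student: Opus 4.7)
The plan is to apply Corollary \ref{cor:module-closure} with the parameter ideal $I = (x^d, y^d)R$, which reduces the question of $\cl_{M_i}$-rationality to whether $I$ is $\cl_{M_i}$-closed. Since $M_i \otimes_R R \cong M_i$, unpacking \cref{defn:moduleclosure} gives
\[I^{\cl_{M_i}} = (IM_i :_R M_i) = \{u \in R \mid uM_i \subseteq IM_i\},\]
turning the problem into a direct combinatorial computation inside $S = k\ps{x,y}$.

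Working in $S$, I view $M_i$ as the $R$-span of $x^i, x^{i-1}y, \ldots, y^i$, i.e.\ the $k$-subspace of $S$ generated by those monomials $x^r y^s$ with $r + s \equiv i \pmod d$ and $r + s \geq i$. Then $IM_i = x^d M_i + y^d M_i$ is the monomial submodule consisting of those $x^r y^s$ in $M_i$ with $r \geq d$ or $s \geq d$. I also verify that $R/I$ has $k$-basis $\{1, v_1, \ldots, v_{d-1}\}$, where $v_a := x^{d-a}y^a$, since any product $v_a v_b = x^{2d-a-b} y^{a+b}$ with $1 \leq a, b \leq d-1$ has $x$-exponent or $y$-exponent at least $d$ and hence lies in $I$.

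A short degree argument shows that any $u \in I^{\cl_{M_i}}$ has zero constant term (otherwise the degree-$i$ part of $uM_i$ would be nonzero, while $IM_i$ sits in degrees $\geq d+i$), so modulo $I$ I may write $u \equiv \sum_{a=1}^{d-1} c_a v_a$. Multiplying by each generator of $M_i$ then gives
\[u \cdot x^b y^{i-b} \equiv \sum_{a=1}^{d-1} c_a\, x^{d-a+b}\, y^{a+i-b} \pmod{IM_i}, \quad 0 \leq b \leq i.\]
Since these monomials are pairwise distinct in $a$ and $IM_i$ is spanned by monomials, the condition $uM_i \subseteq IM_i$ forces $c_a = 0$ whenever $x^{d-a+b}y^{a+i-b}$ has both exponents strictly less than $d$ for some $b$; this happens exactly when $a \in [b+1,\, b + d - i - 1]$ for some $b \in [0, i]$.

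For part (1), if $i \leq d-2$, each of these ``bad'' intervals has length $d - i - 1 \geq 1$, consecutive intervals overlap, and their union as $b$ ranges over $\{0, \ldots, i\}$ is all of $\{1, 2, \ldots, d-1\}$; hence every $c_a$ vanishes, giving $I^{\cl_{M_i}} = I$, so $R$ is $\cl_{M_i}$-rational. For part (2), when $i = d-1$ each bad interval is empty, so $v_a M_{d-1} \subseteq I M_{d-1}$ for every $a$; in particular $v_1 \in I^{\cl_{M_{d-1}}} \setminus I$, and $R$ is not $\cl_{M_{d-1}}$-rational. The main obstacle is purely combinatorial bookkeeping: verifying the monomial description of $IM_i$ and confirming the interval-covering claim in the range $i \leq d-2$.
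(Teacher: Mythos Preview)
Your proof is correct and follows essentially the same approach as the paper: both reduce via \cref{cor:module-closure} to the single parameter ideal $I=(x^d,y^d)$, rewrite $I^{\cl_{M_i}}$ as $IM_i:_R M_i$, and then do a monomial computation inside $S$. The only differences are organizational. The paper first replaces $M_i$ by the isomorphic ideal $I_i=(x^d,x^{d-1}y,\ldots,x^{d-i}y^i)\subseteq R$ and, for each offending monomial $x^ay^b$ (your $v_b$), exhibits one specific generator $x^{d-j}y^j$ of $I_i$ with $j=\min(d-b-1,i)$ witnessing $x^ay^b\cdot I_i\not\subseteq I\cdot I_i$; you instead keep $M_i$ as is and phrase the same computation as the interval-covering statement $\bigcup_{b=0}^{i}[b+1,\,b+d-i-1]=\{1,\ldots,d-1\}$ for $i\le d-2$. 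Your formulation is arguably cleaner and makes the boundary case $i=d-2$ (where the intervals are singletons that exactly tile $\{1,\ldots,d-1\}$) transparent, while the paper's explicit choice of $j$ amounts to selecting, for each $a$, the particular $b'$ whose interval has $a$ as its right endpoint.
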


\begin{proof}
By \cite[Corollary 6.4]{LeuschkeWiegand}, the $M_i$ are indecomposable MCM modules over $R$ since they are isomorphic to the direct summands of $S$ as an $R$-module. Then by \cref{cor:module-closure}, for each $0 \leqslant i \leqslant d-1$, $R$ is $\cl_{M_i}$-rational if and only if some parameter ideal $I$ is $\cl_{M_i}$-closed.

Let $I=(x^d,y^d)R$. Then $I$ is a parameter ideal of $R$.
We use the following isomorphism
\[M_i = x^iR+x^{i-1}yR+\ldots+y^iR \cong (x^d,x^{d-1}y,\ldots,x^{d-i}y^i)R\]
to view $M_i$ as an ideal $I_i$ of $R$ for $0 \leqslant i \leqslant d-1$. 
Then $I$ is $\cl_{M_i}$-closed if and only if $(II_i):_R I_i= I$. 
First we show that $I$ is not $\cl_{M_{d-1}}$-closed. Note that 
\[(x^d,y^d)\cdot (x^d,x^{d-1}y,\ldots,xy^{d-1})=(x^{2d},x^{2d-1}y,\ldots,x^{d+1}y^{d-1},x^dy^d,\ldots,xy^{2d-1}),\]
which is equal to
\[(x^d,x^{d-1}y,\ldots,xy^{d-1},y^d) \cdot (x^d,x^{d-1}y,\ldots,xy^{d-1}).\]
So $(x^{d-1}y,\ldots,xy^{d-1})$ is contained in $I^{\cl_{M_{d-1}}}$, which implies that $I$ is not $\cl_{M_{d-1}}$-closed.

The fact that $I$ is $\cl_{M_i}$-closed for $0 \leqslant i \leqslant d-2$ follows from the next lemma.
\end{proof}

\begin{lem}
Using the previous notation, if $0 \leqslant i \leqslant d-2$ and $J\cdot I_i \subseteq I\cdot I_i$, then $J \subseteq I$.
\end{lem}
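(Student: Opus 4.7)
The lemma amounts to the containment $(II_i) :_R I_i \subseteq I$, since the hypothesis $J \cdot I_i \subseteq II_i$ says precisely $J \subseteq (II_i) :_R I_i$. My plan is to reduce this to a combinatorial problem about monomial ideals in the ambient regular ring $S = k\ps{x,y}$.

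First, $R$ is a direct summand of $S$ as an $R$-module, so $K = R \cap KS$ for every ideal $K$ of $R$; in particular $II_i = R \cap (II_i)S$, where $(II_i)S$ is the monomial ideal of $S$ generated by $x^{2d-j}y^j$ and $x^{d-j}y^{d+j}$ for $0 \leq j \leq i$. Since $(II_i)S$ is monomial and the $R$-generators $x^{d-j}y^j$ of $I_i$ are monomials, the condition $r \cdot I_i \subseteq II_i$ for $r = \sum c_{\alpha\beta} x^\alpha y^\beta \in R$ decomposes termwise: it holds iff every monomial $x^\alpha y^\beta$ in the support of $r$ satisfies $x^{\alpha+d-j}y^{\beta+j} \in (II_i)S$ for all $0 \leq j \leq i$. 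So it suffices to classify which monomials of $R$ (those with $d \mid \alpha+\beta$) pass this test.

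Monomials in $I$ pass the test trivially, so the question reduces to whether any monomial $x^\alpha y^\beta$ with $\alpha, \beta < d$ passes; such a monomial has $\alpha+\beta \in \{0, d\}$. The case $r = 1$ fails immediately since $x^d \in I_i$ but $x^d \notin (II_i)S$. For $r = x^\alpha y^{d-\alpha}$ with $1 \leq \alpha \leq d-1$, checking divisibility of $x^{\alpha+d-j}y^{d-\alpha+j}$ by the two families of generators of $(II_i)S$ yields the equivalence: $x^{\alpha+d-j}y^{d-\alpha+j} \in (II_i)S$ iff $j \leq \alpha + i - d$ or $j \geq \alpha$. The ``forbidden'' integer interval $[\alpha + i - d + 1, \alpha - 1]$ is nonempty precisely when $i \leq d-2$, and for $\alpha \in [1, d-1]$ it always meets $[0, i]$; any $j$ in this intersection witnesses failure of the monomial test. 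Hence no such nontrivial monomial lies in the colon, giving $(II_i) :_R I_i \subseteq I$.

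The main point of care is the interval bookkeeping in the last step: the asymmetric roles of $x$ and $y$ in the generators of $I_i$ make it easy to swap $\alpha$ and $\beta$, and one must match up the forbidden range $[\alpha + i - d + 1, \alpha - 1]$ correctly against $[0, i]$. Once the monomial descriptions of $I_i S$ and $(II_i)S$ are laid out, however, everything reduces to elementary inequalities; this is also consistent with the previous part of the proposition, where the witness $(x^{d-1}y, \ldots, xy^{d-1})$ to failure at $i = d-1$ corresponds exactly to the case where the forbidden interval becomes empty.
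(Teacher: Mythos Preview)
Your proof is correct and follows essentially the same approach as the paper: both reduce to monomials, discard monomials already in $I$, and then for each remaining $x^\alpha y^{d-\alpha}$ with $1 \leqslant \alpha \leqslant d-1$ exhibit an index $j$ for which $x^{\alpha}y^{d-\alpha}\cdot x^{d-j}y^j \notin II_i$. The only difference is cosmetic: you lift to $S$ and determine the entire ``forbidden'' interval $[\alpha+i-d+1,\alpha-1]$ of such $j$, whereas the paper works in $R$ and directly verifies that the single choice $j=\min(\alpha-1,i)$ (the right endpoint of your intersection) does the job.
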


\begin{proof}
Since $I, I_i,$ and $I \cdot I_i$ are monomial ideals, we can assume that $J$ is a monomial ideal. 
Let $x^{a}y^{b}$ be a generator of $J$. Then $a+b$ is divisible by $d$. If $a+b > d$, then $a+b\geqslant 2d$. Hence, either $a\geqslant d$ or $b\geqslant d$, which implies that $x^ay^b \in I$, so we may assume that $a+b=d$.

Let $x^ay^b\in J$ be a monomial not in $I$.  
Then $a\neq 0, b \neq 0$. Let $j=\min(d-b-1, i)$. Then $x^ay^b \cdot x^{d-j}y^j=x^{a+d-j}y^{b+j}\in I\cdot I_i$. Since $j,b+j \leqslant d-1$, we conclude that 
\begin{align*}
x^{a+d-j}y^{b+j}&\in x^d I_i \\
\Leftrightarrow a+d-j &\geqslant 2d-i \\
\Leftrightarrow  2d-b-j &\geqslant 2d-i \\
\Leftrightarrow  i &\geqslant b+j. \\
\end{align*}
Looking at the choice of $j$, there are two possibilities:
\begin{enumerate}
\item $j=i$, which contradicts $i \geqslant b+j>j$.
\item $j=d-b-1 \leqslant i \Leftrightarrow j+b = d-1$, which contradicts $d-2\geqslant i \geqslant b+j$.
\end{enumerate}
Either way we have $x^ay^b\cdot I_i\neq I\cdot I_i $, which is a contradiction. Hence $J \subseteq I$.
\end{proof}

\begin{rmk}
The canonical module of the ring $R$ above is $\omega=M_{d-2}$ (\cite[Corollary 3.1.3]{10.2969/jmsj/03020179}). The above proof shows that $R$ is $\cl_{\omega}$-rational, as expected given that $R$ is F-rational when $k$ has characteristic $p>0$ \cite[5.1]{singh2000veronese}.
\end{rmk}

\begin{prop}
    Let $R=k\ps{x,y}/(x^2y)$. $R$ is neither $\mathrm{cl}_{M_1}$-rational, nor $\mathrm{cl}_{M_2}$-rational, where $M_1=R/(y)$ and $M_2=R/(x^2)$ represent the two isomorphism classes of indecomposable MCM $R$-modules.
\end{prop}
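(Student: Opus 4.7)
The plan is to apply Corollary~\ref{cor:module-closure}, which reduces $\cl_{M_i}$-rationality of a complete local ring to asking whether some parameter ideal is $\cl_{M_i}$-closed. Since $M_1$ and $M_2$ are MCM (hence big CM), and $R=k\ps{x,y}/(x^2y)$ is complete and local, I first need to check that $R$ itself is Cohen--Macaulay so that the framework of Section~\ref{sec:clrational} applies: the primary decomposition $(x^2y)=(x^2)\cap(y)$ gives $\Ass R=\{(x),(y)\}$, so $\dim R=1$, and $x+y\notin(x)\cup(y)$ is a non-zerodivisor, showing $\depth R=1$.

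The key observation is that for any cyclic module $B=R/J$, one has $\cl_B(I)=I+J$ for every ideal $I$. Indeed, the image of $B\otimes_R I$ inside $B\otimes_R R=R/J$ is $(I+J)/J$, so the condition ``$b\otimes r$ lies in this image for all $b\in B$'' reduces, on setting $b=1$, to $\bar r\in(I+J)/J$; the converse is immediate. In particular, $\cl_{M_1}(I)=I+(y)$ and $\cl_{M_2}(I)=I+(x^2)$.

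It remains to produce a single parameter ideal closed under neither operation. I would take $I=(x+y)R$; since $x+y$ avoids both minimal primes, $I$ is a parameter ideal, and substituting $x=-y$ into $x^2y$ yields $R/I\cong k\ps{y}/(y^3)$, of length three. In this quotient $\bar y\neq 0$ and $\overline{x^2}=\bar y^{\,2}\neq 0$, so neither $y$ nor $x^2$ lies in $I$. Therefore $I+(y)\supsetneq I$ and $I+(x^2)\supsetneq I$, i.e.\ $I$ is not $\cl_{M_i}$-closed for either $i$, and Corollary~\ref{cor:module-closure} gives the conclusion.

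I do not anticipate a serious obstacle: the whole argument is the triviality that the module closure coming from a cyclic module is just addition of its defining ideal, together with the observation that both $y$ and $x^2$ survive modulo $x+y$. The only point worth flagging is the initial check that $R$ is Cohen--Macaulay, which ensures that $\cl$-rationality is defined and that Corollary~\ref{cor:module-closure} is applicable; everything else is a direct computation.
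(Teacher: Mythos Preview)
Your proof is correct and takes essentially the same approach as the paper: both choose the parameter ideal $I=(x+y)$ and show $y\in I^{\cl_{M_1}}\setminus I$ and $x^2\in I^{\cl_{M_2}}\setminus I$. Your explicit formula $I_R^{\cl_{R/J}}=I+J$ and the computation $R/I\cong k\ps{y}/(y^3)$ simply make precise what the paper asserts tersely, and your verification that $R$ is Cohen--Macaulay is a reasonable sanity check the paper leaves implicit.
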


\begin{proof}
    The classification of the indecomposable MCM $R$-modules comes from \cite[Example 14.23]{LeuschkeWiegand}. Since $I=(x+y)$ is a parameter ideal, we only need to show that $I^{\mathrm{cl}}$ is strictly larger than $I$ for $\mathrm{cl}=\mathrm{cl}_{M_1}$ and $\mathrm{cl}_{M_2}$.

    By definition of $\mathrm{cl}_{M_1}$, it is clear that $y\in I^{\mathrm{cl}_{M_1}}$. We only need to show that $y\nin I$, but this is immediate since $I\neq \mathfrak{m}=(x,y)R$.

    The proof is similar for $M_2$: $x^2\in I^{\cl_{M_2}}$ but $x^2\nin I$.
\end{proof}

\begin{prop} 
    Let $R=k\ps{x,y}/(y^2)$ where $k$ is a perfect field. Let
    \[
    \mathcal M = \set{I_n:n\in \mathbb N \cup \set{\infty}},
    \]
    where $I_n=(x^n,y)$.
    Then $R$ is not $I_n$-rational for all $n$.
\end{prop}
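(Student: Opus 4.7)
My plan is to invoke Corollary \ref{cor:module-closure}, which reduces $\cl_{I_n}$-rationality to the question of whether a single parameter ideal is $\cl_{I_n}$-closed. To apply it I first need to check that $R = k\ps{x,y}/(y^2)$ is Cohen-Macaulay and that each $I_n$ is a big CM $R$-module. The first point is immediate because $R$ is a complete local hypersurface of dimension one and hence Gorenstein. The second follows from the Leuschke--Wiegand classification invoked in the previous proposition: each $I_n$ is an indecomposable MCM $R$-module, with $x$ acting as a non-zerodivisor (using that $x$ is a regular parameter of $R$).

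Next I would fix the parameter ideal $I = (x)R$, which is a parameter ideal since $R/(x) \cong k\ps{y}/(y^2)$ is Artinian. Unwinding the definition of module closure for ideals, $u \in I^{\cl_{I_n}}$ if and only if $u \cdot I_n \subseteq I \cdot I_n$. The candidate witness is $u = y$: it lies outside $I = (x)R$ (as $(x) \subsetneq (x,y) = \mm$), and the relation $y^2 = 0$ collapses $y \cdot I_n$ dramatically. For finite $n \geqslant 1$ the computation is
\[
y \cdot I_n = (x^n y,\, y^2) R = (x^n y)R = x^{n-1}\cdot(xy)R \subseteq (x^{n+1},\, xy)R = I \cdot I_n,
\]
so $y \in I^{\cl_{I_n}} \setminus I$, as desired.

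The main obstacle is the case $n = \infty$, since the excerpt defines $I_n = (x^n, y)$ uniformly for $n \in \NN \cup \{\infty\}$ without separately spelling out $I_\infty$. I would interpret $I_\infty$ as the natural degenerate case $(y)R \cong R/(y)$, the remaining indecomposable MCM $R$-module in the classification (with $x^\infty$ read as $0$). Under this interpretation the argument is actually easier: $y \cdot I_\infty = (y^2)R = 0 \subseteq (xy)R = I \cdot I_\infty$, so $y$ again witnesses the failure of $I$ to be $\cl_{I_\infty}$-closed. Combining the cases and applying Corollary \ref{cor:module-closure} yields that $R$ is not $\cl_{I_n}$-rational for any $n \in \NN \cup \{\infty\}$; the underlying mechanism in every case is simply that the nilpotent $y$ annihilates enough of $I_n$ to force $y \cdot I_n$ into $(xy)R \subseteq I\cdot I_n$.
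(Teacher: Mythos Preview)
Your proposal is correct and follows essentially the same approach as the paper: both use the parameter ideal $I=(x)$ and show $y\in I^{\cl_{I_n}}\setminus I$ via the computation $y\cdot I_n=(x^ny)\subseteq (x^{n+1},xy)=I\cdot I_n$. You are somewhat more careful than the paper in spelling out the hypotheses needed to invoke the reduction to a single parameter ideal and in explicitly handling the degenerate case $n=\infty$ (which the paper's computation leaves implicit), but the core argument is identical.
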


\begin{proof}
The set $\mathcal M$ gives a representative for each isomorphism class of indecomposable MCM $R$-modules \cite[Example 6.5]{yoshino}.
As a result, if $R$ is $\cl_{I_n}$-rational then $I_nI:_R I_n=I$ for the parameter ideal $I=(x)$. But $y \cdot I_n= y(x^n, y) = (x^ny) \subseteq (x^{n+1},xy)R)=I_nI$, so $y\in I_nI:_R I$ even though $y\nin I$. So $R$ is not $\cl_{I_n}$-rational.
\end{proof}

\begin{thm}
    Let $R=k\ps{x,y}/(x^n+y^2)$ where $k$ is an algebraically closed field and $n$ is an odd positive integer. Consider the set $\set{R, (x,y), (x^2,y), \ldots,(x^{\frac{n-1}{2}},y)}$ of ideals of $R$. If $n>1$, then $R$ is not $\cl_{(x^i, y)R}$-rational for $i\geqslant 1$.
\end{thm}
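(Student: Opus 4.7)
The plan is to apply \cref{cor:module-closure}. First, one checks that $R$ is a one-dimensional complete local domain: for odd $n$ the Weierstrass polynomial $y^2+x^n$ cannot factor as $(y-a(x))(y+a(x))$, since $a(x)^2=-x^n$ would force $a(x)\in k\ps{x}$ to have odd $x$-order, which is impossible. Each $M_i := (x^i,y)R$ with $1\leqslant i \leqslant (n-1)/2$ is then a nonzero ideal in a one-dimensional complete local domain, hence a torsion-free $R$-module of depth $1$, so $M_i$ is MCM. By \cref{cor:module-closure}, $R$ is $\cl_{M_i}$-rational if and only if some parameter ideal of $R$ is $\cl_{M_i}$-closed, so it suffices to produce, for each such $i$, a single parameter ideal that fails this closure test.

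My candidate is $I := xR$, which is a parameter ideal because $R/xR \cong k\ps{y}/(y^2)$ is Artinian. To show $I$ is not $\cl_{M_i}$-closed I will verify that $y \in (IM_i :_R M_i) \setminus I$, so that $y$ lies in $I^{\cl_{M_i}} \setminus I$ by definition of module closure.

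The containment $yM_i \subseteq IM_i = (x^{i+1},xy)R$ reduces to checking the two generators of $M_i$:
\[y\cdot x^i = x^{i-1}(xy)\in (xy)R, \qquad y\cdot y = y^2 = -x^n = -x^{n-i-1}\cdot x^{i+1}\in (x^{i+1})R,\]
where the first inclusion uses $i\geqslant 1$ and the second uses $n\geqslant 2i+1$, i.e.\ $i\leqslant (n-1)/2$ — exactly the range of $i$ appearing in the listed set. The non-membership $y\notin xR$ is immediate since its image in $R/xR\cong k\ps{y}/(y^2)$ is nonzero. There is no genuine obstacle in this argument: the only subtle observations are the irreducibility of $x^n+y^2$ (which makes $R$ a domain and the $M_i$ torsion-free MCM modules) and the arithmetic bound $i\leqslant (n-1)/2$, which is precisely what forces the witness $y^2=-x^n$ to lie in $(x^{i+1})$.
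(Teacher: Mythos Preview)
Your proof is correct and follows essentially the same route as the paper's: both pick the parameter ideal $I=(x)$, exhibit $y\in I^{\cl_{(x^i,y)}}\setminus I$ via the computation $y\cdot x^i\in (xy)$ and $y^2=-x^n\in (x^{i+1})$, and conclude non-rationality. Your write-up adds the explicit irreducibility/MCM verification (the paper instead cites Yoshino's classification), but the core argument is identical.
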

\begin{proof}
    By \cite[Proposition 5.11]{yoshino}, this is a set of representatives of the isomorphism classes of indecomposable MCM modules of $R$.
    Note that $I=(x)R$ is a parameter ideal since $y^2=-x^n\in I \Rightarrow \mathfrak m = \sqrt{I}$. Assume $n>1$. We only need to show that $I_iI:_R I_i \neq I$ where $I_i=(x^i,y)R$ for $i=1,...,\frac{n-1}{2}$.

    Note that $y\cdot I_i = (x^iy, y^2)=(x^iy, x^n)\subseteq (x^{i+1}, xy) = I_i\cdot I$, but $y\nin I$.
\end{proof}

\begin{prop}
    Let $R$ be a 2-dimensional ADE singularity, so that $R$ is a ring of the form $k\ps{x,y,z}/(z^2+g(x,y))$ where $g(x,y)\in k\ps{x,y}$. Let $M$ be a MCM $R$-module, that is, a cokernel of a matrix of the form $zI_n-\varphi$ where $\varphi$ is an $n \times n$ square matrix with entries in $ (x,y)k\ps{x,y}$ and $I_n$ is the $n \times n$ identity matrix. Then $R$ is not $\cl_M$-rational.
\end{prop}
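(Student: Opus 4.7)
The plan is to invoke \Cref{cor:module-closure}, which for a big \CM\ module $B$ declares $R$ to be $\cl_B$-rational iff some (equivalently, every) parameter ideal of $R$ is $\cl_B$-closed. Since every $M$ of the given form is MCM and hence a big \CM\ $R$-module, exhibiting a single parameter ideal that fails to be $\cl_M$-closed suffices to conclude that $R$ is not $\cl_M$-rational. The candidate ideal will be $I=(x,y)R$, and the witness element placing it outside the set of $\cl_M$-closed ideals will be $z$.

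First, I would verify that $I=(x,y)R$ is a parameter ideal of $R$. Because $R$ is nonzero and local of the stated form, we must have $g(0,0)=0$ (else $z^2+g$ would be a unit in $k\ps{x,y,z}$), so $R/I=k\ps{z}/(z^2)$ is Artinian. Combined with $\dim R=2$, this shows $(x,y)R$ is generated by a \sop.

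Next, I would check that $zM\subseteq IM$, which by \cref{defn:moduleclosure} is precisely the condition $z\in I^{\cl_M}$. Writing $\bar e_1,\dots,\bar e_n$ for the images in $M$ of a standard basis of the target $R^n$ in the presentation $R^n\xrightarrow{zI_n-\varphi} R^n\twoheadrightarrow M$, the $j$-th column of $zI_n-\varphi$ produces the relation $z\bar e_j=\sum_i \varphi_{ij}\bar e_i$. Because each $\varphi_{ij}$ lies in $(x,y)k\ps{x,y}\subseteq I$, we get $z\bar e_j\in IM$ for every $j$, so $zM\subseteq IM$.

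Finally, I would show $z\notin I$: if $z=xr+ys$ in $R$, lifting to $k\ps{x,y,z}$ gives $z=xr_0+ys_0+(z^2+g(x,y))t_0$ for some $r_0,s_0,t_0$, and specializing $x=y=0$ yields $z=z^2\,t_0(0,0,z)$ in $k\ps{z}$, forcing $1\in zk\ps{z}$, a contradiction. The three steps together give $I\neq I^{\cl_M}$, and \cref{cor:module-closure} then concludes that $R$ is not $\cl_M$-rational. I do not anticipate a real obstacle here: the content of the argument is simply that the prescribed form $zI_n-\varphi$ with entries of $\varphi$ in $(x,y)k\ps{x,y}$ forces $z$ to act on $M$ through the ideal $(x,y)R$, even though $z$ is manifestly outside $(x,y)R$.
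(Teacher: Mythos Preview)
Your proposal is correct and follows essentially the same approach as the paper: both take the parameter ideal $(x,y)R$, use the presentation matrix $zI_n-\varphi$ to show $zM\subseteq (x,y)M$ (hence $z\in (x,y)^{\cl_M}$), and observe $z\notin (x,y)$ via $R/(x,y)\cong k\ps{z}/(z^2)$. Your explicit invocation of \Cref{cor:module-closure} and the check that $g(0,0)=0$ are minor additions the paper leaves implicit.
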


See \cite{LeuschkeWiegand} for a discussion of MCM modules over these 2-dimensional ADE singularities.

\begin{proof}
    Let $J=(x,y)$, then $z^2\in J$ implies that $\sqrt{J}=\mm$. Note that $z\nin J$ as $R/J\simeq k\ps{z}/(z^2)\neq k$. We will show that $z\in J^{\cl}$ where $\cl =\cl_M$.

    By definition, we need to show that for any $u\in M$, $zu\in JM$. Write $M$ as the cokernel of $R^n \xrightarrow{zI_n-\varphi} R^n$, let $\set{e_i}_{1\leqslant i\leqslant n}$ be a basis for the second copy of $R^n$, and let $\varphi_i$ denote the $i$th column of $\varphi$. Then $ze_i-\varphi_i=0$ in $M$, which implies that $ze_i\in JM$. Hence $zu \in JM$ for any $u\in M$.
\end{proof}

\begin{rmk}
All of the examples in \cite[Section 6]{benali2021cohen} satisfy the above theorem.
\end{rmk}

\section*{Acknowledgments}

We thank Neil Epstein for some very helpful comments on this paper.

\bibliography{main}
\bibliographystyle{halpha}
\end{document}